\newcommand{\UL}[1]{\underline{#1}}
\newcommand{\OL}[1]{\overline{#1}}
\newcommand{\rem}[1]{}
\newtheorem{proposition}{Proposition}
\newtheorem{theorem}{Theorem}
\newtheorem{example}{Example}
\newtheorem{remark}{Remark}
\newenvironment{proof}[1][Proof]{\begin{trivlist}
\item[\hskip \labelsep {\bfseries #1}]}{\end{trivlist}}
\newcommand{\qed}{\nobreak \ifvmode \relax \else
      \ifdim\lastskip<1.5em \hskip-\lastskip
      \hskip1.5em plus0em minus0.5em \fi \nobreak
      \vrule height0.75em width0.5em depth0.25em\fi}
\newcommand{\mysubeq}[2]{
\begin{subequations}\label{#1}
\begin{align}
#2
\end{align}\end{subequations}}
\title{Scalable Computation of 2D-Minkowski Sum of Arbitrary Non-Convex Domains: Modeling Flexibility in Energy Resources\footnote{}}
\author{Soumya Kundu, Vikas Chandan and Karan Kalsi\\
  Optimization and Control Group\\
  Pacific Northwest National Laboratory, Richland, WA 99352, USA\\
  {\underline{ \{soumya.kundu,\,vikas.chandan,\,karanjit.kalsi\}@pnnl.gov}} \\}
\date{}
\begin{document}
\maketitle
\begin{abstract}
The flexibility of active ($p$) and reactive power ($q$) consumption in distributed energy resources (DERs) can be represented as a (potentially non-convex) set of points in the $p$-$q$ plane. Modeling of the aggregated flexibility in a heterogeneous ensemble of DERs as a Minkowski sum (M-sum) is computationally intractable even for moderately sized populations. In this article, we propose a scalable method of computing the M-sum of the flexibility domains of a heterogeneous ensemble of DERs, which are allowed to be non-convex, non-compact. In particular, the proposed algorithm computes a guaranteed superset of the true M-sum, with desired accuracy. The worst-case complexity of the algorithm is computed. Special cases are considered, and it is shown that under certain scenarios, it is possible to achieve a complexity that is linear with the size of the ensemble. Numerical examples are provided by computing the aggregated flexibility of different mix of DERs under varying scenarios. 

\end{abstract}

\section{Introduction}

\vspace{-0.1in}

As the penetration of low-inertia renewable genera- tion increases, various forms of distributed energy resources (DERs), including flexible and responsive electrical loads, will be increasingly integrated into the grid operations to support ancillary services. In this context, DER refers to any load and distributed generation that can offer flexibility in net active and reactive power consumption (equivalently, generation). Real-time coordination and control of DERs requires an appropriate modeling and quantification of the loads behavior and their available flexibility. Modeling of aggregated flexibility in an ensemble of flexible loads for ancillary services (in particular, frequency regulation and ramping) have been explored in the literature in recent years \cite{Callaway:2009, Kundu:2012CDC, Perfumo:2012, Mathieu:2013, Zhang:2013, Hao:2015, Zhao:2016}. The proposed approaches are gen- erally applicable to ensembles of similar loads, such as a collection of residential air-conditioners, or a collection of plug-in electric vehicles. The aggregation of flexibility for a heterogeneous group of DERs, however, remains a challenging task. 

In order to efficiently coordinate tens of thousands of flexible loads in distribution systems, while also satisfying line-flow and node voltage constraints, hier- archical modeling and control frameworks have been proposed \cite{Callaway:2011,Bernstein:2015}. Hierarchical architectures alleviate the computational complexity of the optimal DER dispatch problem, by having resource aggregators to participate in the network optimization problem, instead of the individual DERs. Resource aggregators, referred to here as the \textit{aggregate device controller} (or ADC), are tasked with the responsibility of aggregating the flexibility, in active ($p$) and reactive power ($q$), of the neighboring DERs locally at the level of a couple of service transformers (tens of residential customers). The mix of such DERs is likely to be heterogeneous, e.g. a random collection of air-conditioners, electric water-heaters, batteries, solar photovoltaic inverers, wind inverters, etc. Flexibility of each DER can be represented as a union of points or domains in the $p$-$q$ plane. Aggregated $(p,q)$-flexibility of such an ensemble is a Minkowski sum (M-sum) of the flexibility sets of the individual DERs, and can be non-convex, non-compact. In recent work, \cite{Kundu:2018PSCC}, authors presented a geometric approach to approximate the aggregated flexibility domain of an ensemble of heterogeneous DERs using convex polygons. However, approximation of the true aggregated flexibility domain via computation of the exact M-sum of the individual flexibility domains still remains a computational challenge, with computational time increasing exponentially with the ensemble size.

Efficient computation of 2D M-sum of arbitrary polygons (convex/non-convex) have generated interest (primarily) in the field of computational geometry \cite{Ramkumar:1996,CGAL:2000,Agarwal:2002,Wein:2006,Behar:2011}, resulting in publicly available tools \cite{CGAL:2000,MPT:2004}. Convolution based methods \cite{Behar:2011,Wein:2006,CGAL:2000} have been shown to perform better than polynomial-decomposition based methods \cite{Agarwal:2002}. However, applicability and performance guarantees of such algorithms for M-sum of large number of arbitrary non-polygon 2D domains is not obvious. In this paper we propose a computationally tractable method of approximating the aggregated flexi- bility for an arbitrary collection of DERs. The rest of the paper is organized as follows. In Sec.\,\ref{S:problem} we describe the problem of aggregating flexibility of heterogeneous DERs. Sec.\,\ref{S:method} presents the key idea behind the proposed approach, while the detailed algorithmic steps along with an analysis of its computational performance is discussed in Sec.\,\ref{S:algo}. Numerical examples are presented in Sec.\,\ref{S:result}, before we conclude the article in Sec.\,\ref{S:concl}.

%================================
\section{Problem Description}\label{S:problem}
%================================

\vspace{-0.1in}

%Consider a hierarchical operation framework in which a distribution system operator (DSO) communicates with aggregate device controllers (ADCs) at every control period and solves an optimal dispatch problem to schedule the DERs. The primary responsibilities of the ADC are two-fold: 1) before the start of each control period (5-15\,min), the ADC estimates the net flexibility (in active and reactive power) available among the DERs under its control, and exchanges that information to the DSO; 2) during the control period, the ADC coordinates the DERs in real-time to track the active and reactive power set-points dispatched by the DSO. In this paper we restrict our discussion only on the first part, i.e. the problem of estimating the net flexibility in a group of (dissimilar) DERs. 
%
\begin{figure*}[h]
\centering
\captionsetup{justification=centering}
\subfigure[batteries]{
\includegraphics[scale=0.39]{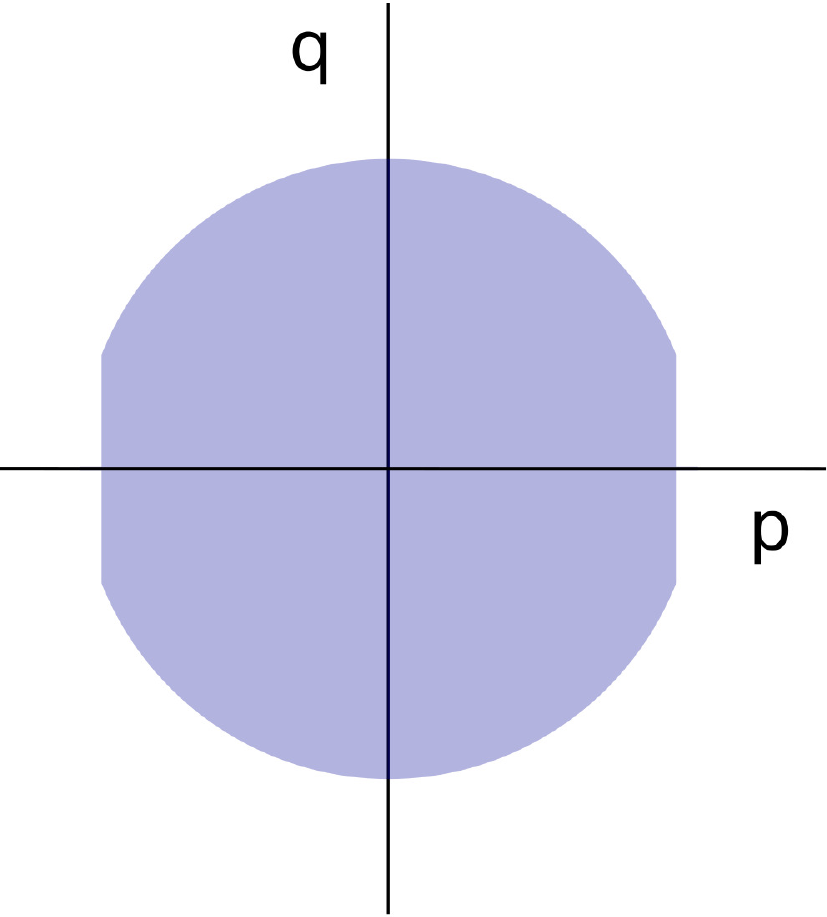}\label{F:battery}
}
\hspace{0.1in}
\subfigure[switching loads]{
\includegraphics[scale=0.39]{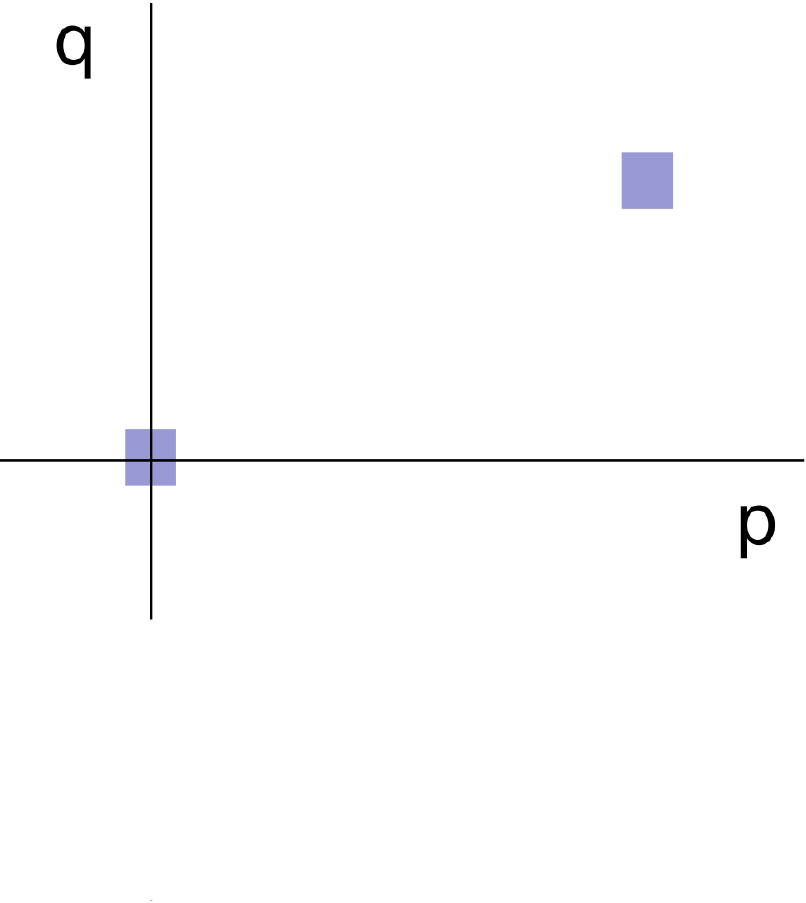}\label{F:hvac}
}
\hspace{0.1in}
\subfigure[PV inverters]{
\includegraphics[scale=0.39]{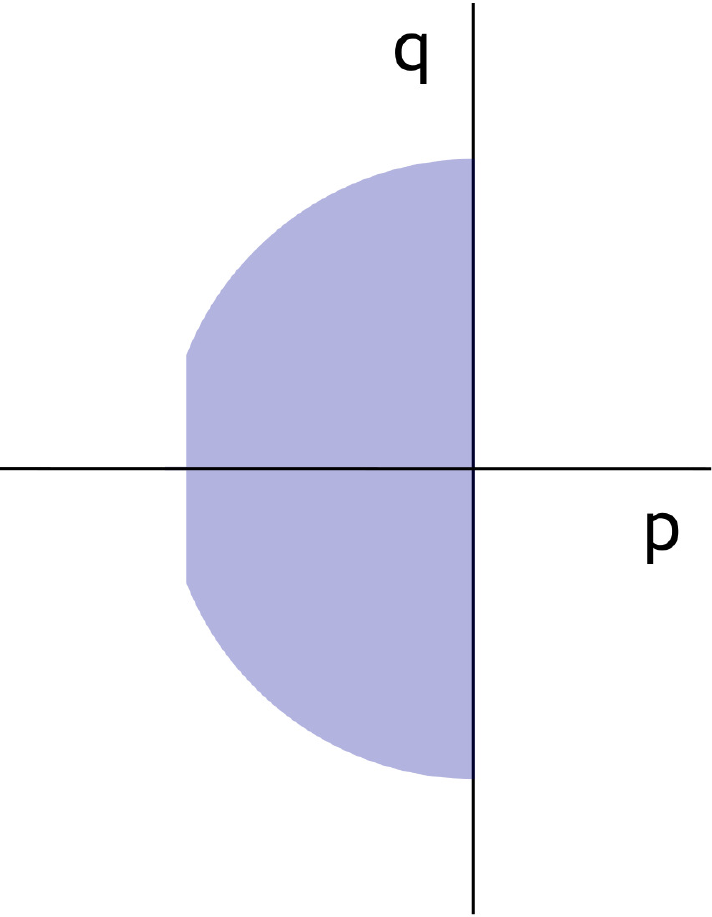}\label{F:pv}
}
\hspace{0.1in}
\subfigure[wind inverters]{
\includegraphics[scale=0.39]{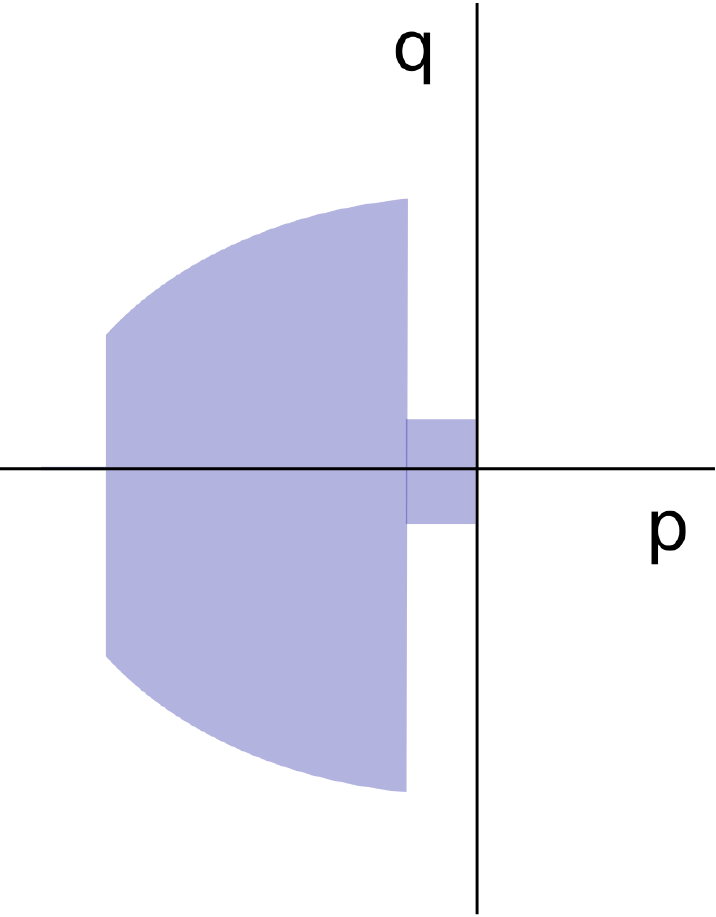}\label{F:wind}
}
\caption[Optional caption for list of figures]{Representation of continuous and discrete flexibility domains for an individual DER of certain types.}
\label{F:domains}
\end{figure*}
In the context of this article, any energy (consuming or generating) resource which offers certain flexibility in active ($p$) and/or reactive ($q$) power, possibly over a reasonably short time window (such as a 5-15\,min long control period), is considered as a DER. We use the notation $\mathcal{F}$ to represent the flexibility domain as a collection of $(p,q)$-points that are physically admissible by the DER (possibly via some local device-level control). Note that the flexibility domain ($\mathcal{F}$) could be a continuous or a discrete domain. For example, a solar photovoltaic (PV) inverter that can modulate its active and reactive power over a continuous range will have a continuous flexibility domain, while the flexibility domain for switching loads, such as an air-conditioner or electric water-heater, will be discrete. Moreover this flexibility is time-varying, and depends on exogenous parameters as well as end-user preferences. 

Figure\,\ref{F:domains} shows examples of flexibility domains for certain types of DERs, with positive (negative) values of $p$ and $q$ denoting consumption (generation). Discrete flexibility domain of a switching load (e.g. air-conditioner) that operates in two discrete operational states (`on' and `off') is shown in Figure\,\ref{F:hvac}, while the rest of the plots represent continuous flexibility domains. Batteries (Figure\,\ref{F:battery} offer full four-quadrant flexibility, while PV (Figure\,\ref{F:pv}) and wind (Figure\,\ref{F:wind}) inverters offer flexibility only on the left half-plane (active power generation). Note that the flexibility off- ered by the DERs is dynamic, and change based on end-usage and exogenous influence. For example, if there is a cloudy sky, the PV inverter output might only be restricted to a small fraction of its rated generation. Similarly, the air-conditioner may be operating mostly in `on'-state when the outside air-temperature is high. 

Let us assume that there are $N$ DERs within some heterogeneous ensemble. The flexibility domain of the $i$-th DER is denoted by $\mathcal{F}_i$\,, such that its active and reactive power consumption (with negative value signifying net generation)
\begin{align*}
(p_i,q_i)\in\mathcal{F}_i\,,\quad i\in\lbrace 1,2,\dots,N\rbrace\,.
\end{align*}
The goal of the flexibility aggregation task is to find the net flexibility domain $\mathcal{F}_{\Sigma}^{}$ in the form of a Minkowski  sum of the individual DER flexibility domains ($\mathcal{F}_i$), such that,
\begin{align*}
%\left(\sum_{i=1}^Np_i\,,\,\sum_{i=1}^Nq_i\right)\in\mathcal{F}_\Sigma^{}&:=\biguplus_{i=1}^N\mathcal{F}_i\\
%&=\left\lbrace (p,q)\,\left| \begin{array}{c} p=\sum_{i=1}^Np_i\\
%q=\sum_{i=1}^Nq_i\\
%(p_i,q_i)\in\mathcal{F}_i~\forall i\end{array}\right.\right\rbrace
\mathcal{F}_\Sigma^{}&:=\biguplus_{i=1}^N\mathcal{F}_i=\left\lbrace (p,q)\,\left| \begin{array}{c} p=\sum_{i=1}^Np_i\\
q=\sum_{i=1}^Nq_i\\
%\text{where }
\,(p_i,q_i)\in\mathcal{F}_i~\forall i\end{array}\right.\right\rbrace.
\end{align*}
Calculating the exact M-sum of the individual flexibility domains is computationally complex, especially as the number of DERs increases. Thus, from a practical point-of-view, a desirable approach is to construct approximations of the aggregate flexibility domain in a scalable way. In this paper, we propose a computationally tractable method to approximate, with arbitrary accuracy, the aggregate flexibility of the (heterogeneous) ensemble of DERs.

%==================================================================================
\section{Flexibility Aggregation: Key Idea}\label{S:method}
%==================================================================================

\vspace{-0.1in}

%%-----------------------------------------------------------------
%\subsection{Modeling Individual Flexibility Domain}
%%-----------------------------------------------------------------
In general, the flexibility domains of DERs can be represented in the form of:
\mysubeq{E:flex_compact}{
\mathcal{F}&=\bigcup_{k=1}^K\mathcal{F}^k\\
\forall k:~\mathcal{F}^k&=\left\lbrace (p,q)\left|\begin{array}{c}
p\in\left[\UL{p}^k\!,\,\OL{p}^k\right]\\
q\in\left[\UL{g}^k(p),\,\OL{g}^k(p)\right]
\end{array}\right.\right\rbrace\!,\!
} 
where $K$ is some positive integer; and $\UL{g}^k(p)$ and $\OL{g}^k(p)$ represent the flexibility in reactive power at any given active power value within the range between $\UL{p}^k$ and $\OL{p}^k$. This representation can be used to represent the continuous and discrete flexibility domains  of the types depicted in Figure\,\ref{F:domains}, as illustrated below. 

\begin{example}
(\textsc{Batteries}) The flexibility domain of a battery, with a maximum charge/discharge rate of $p^{\max}$ and the apparent power rating of $s\!>\!p^{\max}$\,, is given by 
\begin{align*}
\mathcal{F}\!=\!\left\lbrace (p,q)\left|p\in[-p^{\max},{p}^{\max}],\,|q|\leq\sqrt{s^2-p^2}\right.\right\rbrace.
\end{align*} \end{example} 

%\begin{example}
%(\textsc{PV Inverters}) The flexibility domain of a solar PV inverter, with maximum active power generation of $p^{\max}$ and the apparent power rating of $s\!>\!p^{\max}$\,, is given by
%\begin{align*}
%\mathcal{F}\!=\!\left\lbrace (p,q)\left|\,p\!\in\![-p^{\max},0],\, |q|\!\leq\! \sqrt{s^2\!-\!p^2}\right.\right\rbrace.
%\end{align*} 
%\end{example} 

\begin{example}
(\textsc{Wind Inverters}) The flexibility domain of a wind inverter, with a maximum active power generation of $p^{\max}$ and the apparent power ratings $s_1\!>\!\sqrt{\alpha}p^{\max}$ (due to rotor current limits) and $s_2\!>\!\sqrt{\alpha}p^{\max}$ (due to stator current limits)\,, is given by \cite{Lund:2007,Tian:2013,Martin:2015} 
\begin{align*}
\mathcal{F}&=\mathcal{F}^1\bigcup\mathcal{F}^2\bigcup\mathcal{F}^3\\
\mathcal{F}^1&=\left\lbrace (p,q)\left| \,p\in[-p^0,0]\,,\,q\in[-q^0,\,q^0]\right.\right\rbrace\\
\mathcal{F}^2&=\left\lbrace (p,q)\left|
\begin{array}{c}
p\in[-{p}^{\max},-p^0)\\
0\leq q\leq\sqrt{s_2^2-\alpha p^2}
\end{array}\right.\right\rbrace\\
\mathcal{F}^2&=\left\lbrace (p,q)\left|
\begin{array}{c}
p\in[-{p}^{\max},-p^0)\\
-\sqrt{s_1^2-\alpha p^2}\leq q\leq 0
\end{array}\right.\right\rbrace
\end{align*} 
where $p^0$ and $q^0$ are much smaller than the rated capacities, and $\alpha>0$\,. 
\end{example} 

\begin{example}\label{Ex:hvac}
(\textsc{Air-Conditioners}) The flexibility domain of a residential air-conditioner with an active power consumption rating of $p^{\max}$ (equal to the power consumed in `on' state) is represented by 
\begin{align*}
\mathcal{F}&=\mathcal{F}^1\cup\mathcal{F}^2\,,\,\mathcal{F}^1=\left\lbrace (0,0)\right\rbrace,\,\mathcal{F}^2=\left\lbrace ({p}^{\max},\gamma {p}^{\max})\right\rbrace
\end{align*} 
where $\gamma>0$ is related to the power factor. 
\end{example} 

%%-----------------------------------------------------------------------
%\subsection{Discretization of Flexibility Domains}
%%-----------------------------------------------------------------------

Note that the individual flexibility domains can be continuous and discrete, as well as of arbitrary shape and size. Unfortunately, computation of the exact M-sum is possible only in some specific cases. For example, the M-sum of two (semi-)circles is another (semi-)circle with a radius equal to the sum of the radii of the individual (semi-)circles and its center at the vector sum of the centers of the individual (semi-)circles. As another example, the M-sum of two discrete (on/off) loads with individual flexibility domains $\lbrace (0,0),\,(p_1,q_1)\rbrace$ and $\lbrace (0,0),\,(p_2,q_2)\rbrace$\,, is given by $\lbrace (0,0),\,(p_1,q_1),\,(p_2,q_2),\,(p_1\!+\!p_2,q_1\!+\!q_2)\rbrace$\,. However, there does not exist a generic algo- rithm with guaranteed performance for computation of M-sum of continuous sets with arbitray shape (non-polygons). Moreover, the computational time re- quired to compute the M-sum of discrete on/off loads increases exponentially rendering the exact computation intractable even for  moderately sized populations. 

Therefore it is important to devise a scalable app- roach that computes a \textit{sufficiently close approximation} of the M-sum of diverse/heterogeneous set of individual domains, including both continuous and discrete do- mains. Let us first explain the key idea before going into the details of the algorithm in the next section. Note that it is easy to compute the exact M-sum of two domains each of which is modeled as a rectangle. Consider 
\begin{align*}
\mathcal{F}_1&=\left\lbrace (p,q)\left| \,p\in[\underline{p}_1,\overline{p}_1],\,q\in[\underline{q}_1,\overline{q}_1]\right.\right\rbrace\\
\mathcal{F}_2&=\left\lbrace (p,q)\left| \,p\in[\underline{p}_2,\overline{p}_2],\,q\in[\underline{q}_2,\overline{q}_2]\right.\right\rbrace.
\end{align*}
Note that we allow the boundaries of the rectangle to be non-unique, i.e. it is possible to have $\underline{p}_i=\overline{p}_i$ and/or $\underline{q}_i=\overline{q}_i$\,. Their M-sum is simply given by
\begin{align*}
\mathcal{F}_\Sigma^{}&=\mathcal{F}_1\cup\mathcal{F}_2\\
&=\left\lbrace (p,q)\left|\begin{array}{c}
p\in[\underline{p}_1+\underline{p}_2,\,\overline{p}_1+\overline{p}_2]\\
q\in[\underline{q}_1+\underline{q}_2,\,\overline{q}_1+\overline{q}_2]
\end{array} \right.\right\rbrace
\end{align*}
Extension of this to aggregation of domains which are unions of multiple ($\geq 1$) rectangles is possible as follows. Suppose
\begin{align*}
\mathcal{F}_1&=\bigcup_{i=1}^{m_1}\left\lbrace (p,q)\left| \,p\in[\underline{p}_1^i,\overline{p}_1^i],\,q\in[\underline{q}_1^i,\overline{q}_1^i]\right.\right\rbrace\\
\mathcal{F}_2&=\bigcup_{i=1}^{m_2}\left\lbrace (p,q)\left| \,p\in[\underline{p}_2^i,\overline{p}_2^i],\,q\in[\underline{q}_2^i,\overline{q}_2^i]\right.\right\rbrace,
\end{align*}
then
\begin{align*}
\mathcal{F}_\Sigma^{}&=\mathcal{F}_1\cup\mathcal{F}_2\\
&=\bigcup_{i=1}^{m_1}\bigcup_{j=1}^{m_2}\left\lbrace (p,q)\left|\begin{array}{c}
p\in[\underline{p}_1^i+\underline{p}_2^j,\,\overline{p}_1^i+\overline{p}_2^j]\\
q\in[\underline{q}_1^i+\underline{q}_2^j,\,\overline{q}_1^i+\overline{q}_2^j]
\end{array} \right.\right\rbrace
\end{align*}
The computational complexity of the above M-sum is $\mathcal{O}(m_1m_2)$\,. This forms the basis of our proposed algo- rithm for approximating M-sum of $N$ ($\geq 2$) flexibility domains of arbitrary size and shape. 

\begin{remark}
The above can be extended to shapes other than rectangles too, such as circles. However, in this paper, we will focus our attention to rectangles only.
\end{remark}

%-----------------------------------------------------------------------
\section{Minkowski Sum: Scalable Algorithm}\label{S:algo}
%-----------------------------------------------------------------------

\vspace{-0.1in}

Suppose that we have an ensemble of $N$ DERs with flexibility domains given by 
\mysubeq{E:flex_i}{
\mathcal{F}_i&=\bigcup_{k=1}^{K_i}\mathcal{F}_i^k\quad \forall i\in\lbrace 1,\dots,N\rbrace\\
\forall k:~\mathcal{F}_i^k&=\left\lbrace (p,q)\left|\begin{array}{c}
p\in\left[\UL{p}_i^k\!,\,\OL{p}_i^k\right]\\
q\in\left[\UL{g}_i^k(p),\,\OL{g}_i^k(p)\right]
\end{array}\right.\right\rbrace\!,\!
} 
Let us denote by $p_i^{\inf}$ and $p_i^{\sup}$, respectively, the \textit{infimum} and \textit{supremum} of the feasible $p$-points of the $i$-th DER, while $q_i^{\inf}$ and $q_i^{\sup}$ represent, respectively, the \textit{infimum} and \textit{supremum} of the feasible $q$-points, i.e.
\begin{subequations}\label{E:pqinfsup}
\begin{align}
p_i^{\inf}&:=\inf\lbrace p\left|\,\exists\, q\,\text{ so that }(p,q)\in\mathcal{F}_i\right.\rbrace\\
p_i^{\sup}&:=\sup\lbrace p\left|\,\exists\, q\,\text{ so that }(p,q)\in\mathcal{F}_i\right.\rbrace\\
q_i^{\inf}&:=\inf\lbrace q\left|\,\exists\, p\,\text{ so that }(p,q)\in\mathcal{F}_i\right.\rbrace\\
q_i^{\sup}&:=\sup\lbrace q\left|\,\exists\, p\,\text{ so that }(p,q)\in\mathcal{F}_i\right.\rbrace
\end{align}\end{subequations}

Note that for any given positive scalar $\varepsilon>0$\,, there exist positive integers $M_p(\varepsilon)$ and $M_q(\varepsilon)$ defined as:
\mysubeq{E:Mpq}{
M_p(\varepsilon):=\left\lceil \frac{1}{\varepsilon}\sum_{i=1}^N{\left(p_i^{\sup}\!-\!p_i^{\inf}\right)}\right\rceil\,.\\
M_q(\varepsilon):=\left\lceil\frac{1}{\varepsilon}\sum_{i=1}^N{\left(q_i^{\sup}\!-\!q_i^{\inf}\right)}\right\rceil\,.
}
If we were to split the whole $p$-range in the aggregate domain into equal-sized bins of length $\varepsilon$\,, we would need $M_p(\varepsilon)$ of such bins. Similarly to cover the $q$-range, we need $M_q(\varepsilon)$ bins. For reasons that will become clearer later, we will refer to $\varepsilon$ as the \textit{`tightness'} parameter - with smaller $\varepsilon$ implying \textit{tighter} results.

We are now in a position to describe the algorithm of approximate computation of M-sum. We will first describe two critical steps of the process, the \textit{initial discretization} step and the \textit{pixelization} step, and then move on the describe the complete algorithm.

%-----------------------------------------------------------------------
\subsection{Initial Discretization Step}\label{S:discrete}
%-----------------------------------------------------------------------

\vspace{-0.1in}

At the start of the process each of the individual flexibility domains $\mathcal{F}_i$ are discretized into sets of at most $M_p(\varepsilon)\times M_q(\varepsilon)$ rectangular blocks. There are several ways this discretization can be performed. To be specific, in this paper we consider  discretization of the following form:
\begin{align*}
\forall i\!:~\mathcal{F}_i&\mapsto\mathcal{F}_i^U(\varepsilon)=\bigcup_{j=1}^{m_i}\left\lbrace (p,q)\left| \begin{array}{c}
p\in[\underline{p}_i^j,\overline{p}_i^j]\\
q\in[\underline{q}_i^j,\overline{q}_i^j]
\end{array}\right.\right\rbrace \\
\text{where,}~&  \overline{p}_i^j-\underline{p}_i^j\leq \frac{{\varepsilon\left(p_i^{\sup}\!-\!p_i^{\inf}\right)}}{\sum_{i=1}^N{\left(p_i^{\sup}\!-\!p_i^{\inf}\right)}}\quad\forall j\!=\!1,\dots,m_i\\
& \overline{q}_i^j-\underline{q}_i^j\leq \frac{{\varepsilon\left(q_i^{\sup}\!-\!q_i^{\inf}\right)}}{\sum_{i=1}^N{\left(q_i^{\sup}\!-\!q_i^{\inf}\right)}}\quad\forall j\!=\!1,\dots,m_i\\
&m_i\leq M_p(\varepsilon)\cdot M_q(\varepsilon)\\
\text{and }~& \mathcal{F}_i\subseteq\mathcal{F}_i^U(\varepsilon)\,.
\end{align*}
Note that, for example, since the discrete switching (on/off) loads have only two discrete points in the feasibility space, for those loads $\mathcal{F}_i=\mathcal{F}_i^U(\varepsilon)$\,. 

%Clearly, as per our definition of individual flexibility domains in the generic form given in \eqref{E:flex_compact}, if for any $\mathcal{F}_i$ the number of disjoint components, $K_i$\,, is less than $M(\varepsilon)^2$, then $\mathcal{F}_i=\mathcal{F}_i^U(\varepsilon)$\,.

\begin{remark}
The last condition ensures that the discrete approximation is a superset of the corresponding actual flexibility domain. This condition can be modified to reflect a subset or any other type of approximation.
\end{remark}

%-----------------------------------------------------------------------
\subsection{Pixelization Step}\label{S:pixel}
%-----------------------------------------------------------------------

\vspace{-0.1in}

Let us consider the M-sum of two domains each of which is represented as a union of at most $M_p(\varepsilon)\times M_q(\varepsilon)$ rectangular blocks, as follows:
\begin{align*}
\forall i\in\lbrace 1,2\rbrace:~\mathcal{F}_i^U(\varepsilon)=&\,\bigcup_{j=1}^{m_i}\left\lbrace (p,q)\left| \begin{array}{c}
p\in[\underline{p}_i^j,\overline{p}_i^j]\\
q\in[\underline{q}_i^j,\overline{q}_i^j]
\end{array}\!\right.\!\!\right\rbrace\!\\
\forall j\!=\!1,\dots,m_i\!:~\overline{p}_i^j-\underline{p}_i^j&\leq \frac{{\varepsilon\left(p_i^{\sup}\!-\!p_i^{\inf}\right)}}{\sum_{i=1}^N{\left(p_i^{\sup}\!-\!p_i^{\inf}\right)}}\\
\overline{q}_i^j-\underline{q}_i^j&\leq \frac{{\varepsilon\left(q_i^{\sup}\!-\!q_i^{\inf}\right)}}{\sum_{i=1}^N{\left(q_i^{\sup}\!-\!q_i^{\inf}\right)}}\\
m_i&\leq M_p(\varepsilon)\cdot M_q(\varepsilon)\,.
\end{align*}
We want to approximate their M-sum $\mathcal{F}_1^U(\varepsilon)\cup \mathcal{F}_2^U(\varepsilon)$ as another union of at most $M_p(\varepsilon)\times M_q(\varepsilon)$ rectangular blocks, denoted by $\mathcal{F}_{\Sigma}^U$\, such that
\begin{align*}
\mathcal{F}_{\Sigma}^U&\supseteq \mathcal{F}_1^U(\varepsilon)\cup \mathcal{F}_2^U(\varepsilon)\\
&=\bigcup_{i=1}^{m_1}\bigcup_{j=1}^{m_2}\left\lbrace (p,q)\left|\begin{array}{c}
p\in[\underline{p}_1^i+\underline{p}_2^j,\,\overline{p}_1^j+\overline{p}_2^j]\\
q\in[\underline{q}_1^i+\underline{q}_2^j,\,\overline{q}_1^i+\overline{q}_2^j]
\end{array} \right.\right\rbrace.
\end{align*}
Clearly, 
\begin{align*}
m_1\,m_2\leq M_p(\varepsilon)\!\cdot\!M_q(\varepsilon)\implies \mathcal{F}_{\Sigma}^U= \mathcal{F}_1^U(\varepsilon)\cup \mathcal{F}_2^U(\varepsilon)\,.
\end{align*} 
When, however, $m_1\,m_2> M_p(\varepsilon)\cdot M_q(\varepsilon)$\,, we propose the following two steps for appropriate pixelization of the M-sum $\mathcal{F}_1^U(\varepsilon)\cup \mathcal{F}_2^U(\varepsilon)$\,.

\subsubsection{\textsc{(Pixelization) Step 1:} }

We first compute the estimated number of \textit{pixels} on the $p$- and $q$-axes. In order to maintain at most $M_p(\varepsilon)\times M_q(\varepsilon)$ pixels, we use the \textit{pixel sizes} on the $p$-axis and $q$-axis as:
\mysubeq{}{
\widehat{\varepsilon}_p &= \varepsilon\,\frac{\sum_{i=1}^2 \left(p_i^{\sup}-p_i^{\inf}\right)}{\sum_{i=1}^N{\left(p_i^{\sup}\!-\!p_i^{\inf}\right)}}\leq\varepsilon\\
\widehat{\varepsilon}_q &= \varepsilon\,\frac{\sum_{i=1}^2 \left(q_i^{\sup}-q_i^{\inf}\right)}{\sum_{i=1}^N{\left(q_i^{\sup}\!-\!q_i^{\inf}\right)}}\leq \varepsilon
}
With the appropriate pixel-sizes $\widehat{\varepsilon}_p$ and $\widehat{\varepsilon}_q$ determined, the rectangular space defined by 
\begin{align*}
\left\lbrace (p,q)\left|\begin{array}{c}
p\in[\underline{p}_1^{\inf}+\underline{p}_2^{\inf},\,\overline{p}_1^{\sup}+\overline{p}_2^{\sup}]\\
q\in[\underline{q}_1^{\inf}+\underline{q}_2^{\inf},\,\overline{q}_1^{\sup}+\overline{q}_2^{\sup}]
\end{array} \right.\right\rbrace,
\end{align*}
which is a superset of the aggregated flexibility space $\mathcal{F}_1^U(\varepsilon)\cup \mathcal{F}_2^U(\varepsilon)$\,, can be represented with the help of a total of $M_p(\varepsilon)\times M_q(\varepsilon)$ equal-sized pixels, $M_p(\varepsilon)$ pixels on the $p$-axis and $M_q(\varepsilon)$ pixels on the $q$-axis. Note that
\begin{align*}
\left\lceil{\frac{1}{\widehat{\varepsilon}}\sum_{i=1}^2 \left(p_i^{\sup}-p_i^{\inf}\right)}\right\rceil&=M_p(\varepsilon)\notag\\
\left\lceil{\frac{1}{\widehat{\varepsilon}}\sum_{i=1}^2 \left(q_i^{\sup}-q_i^{\inf}\right)}\right\rceil&=M_q(\varepsilon)\,.\notag
\end{align*}
The domain defined by each $(k,l)$-th pixel, where $k\in\lbrace 1,\dots,M_p(\varepsilon)\rbrace$ and $l\in\lbrace 1,\dots,M_q(\varepsilon)\rbrace$\,, is defined by:
\begin{align}\label{E:pixel}
&\mathbf{pixel}(k,l)\\
&:=\left\lbrace (p,q)\left|\begin{array}{c}
(k-1)\,\widehat{\varepsilon}_p\leq p-\left(\underline{p}_1^{\inf}+\underline{p}_2^{\inf}\right)\leq k\,\widehat{\varepsilon}_p\\
(l-1)\,\widehat{\varepsilon}_q\leq q-\left(\underline{q}_1^{\inf}+\underline{q}_2^{\inf}\right)\leq l\,\widehat{\varepsilon}_q\end{array}\!\!\right.\!\!\right\rbrace\notag
\end{align}

\subsubsection{\textsc{(Pixelization) Step 2:}}

As the final step in the \textit{pixelization} step, we identify the \textit{pixels} that have some overlap with the sum $\mathcal{F}_1^U(\varepsilon)\cup \mathcal{F}_2^U(\varepsilon)$\,. Recall that the sum is a union of $m_1\,m_2$ rectangular blocks. Thus, for every pair of $(i,j)\,,\,i\in\lbrace 1,\dots,m_1\rbrace\,,\,j\in\lbrace 1,\dots,m_2\rbrace$\,, there is a rectangular block in the sum $\mathcal{F}_1^U(\varepsilon)\cup \mathcal{F}_2^U(\varepsilon)$ defined by 
\begin{align*}
\left\lbrace (p,q)\left|\begin{array}{c}
p\in[\underline{p}_1^i+\underline{p}_2^j,\,\overline{p}_1^j+\overline{p}_2^j]\\
q\in[\underline{q}_1^i+\underline{q}_2^j,\,\overline{q}_1^i+\overline{q}_2^j]
\end{array} \right.\right\rbrace.
\end{align*}
Associated with it are the following indexes:
\mysubeq{E:index}{
k_0&=\max\left\lbrace 1,\left\lceil{\frac{1}{\widehat{\varepsilon}_p} \left(\underline{p}_1^i+\underline{p}_2^j-p_1^{\inf}-p_2^{\inf}\right)}\right\rceil\right\rbrace\\
k_f&=\max\left\lbrace 1,\left\lceil{\frac{1}{\widehat{\varepsilon}_p} \left(\overline{p}_1^i+\overline{p}_2^j-p_1^{\inf}-p_2^{\inf}\right)}\right\rceil\right\rbrace\\
l_0&=\max\left\lbrace 1,\left\lceil{\frac{1}{\widehat{\varepsilon}_q} \left(\underline{q}_1^i+\underline{q}_2^j-q_1^{\inf}-q_2^{\inf}\right)}\right\rceil\right\rbrace\\
l_f&=\max\left\lbrace 1,\left\lceil{\frac{1}{\widehat{\varepsilon}_q} \left(\overline{q}_1^i+\overline{q}_2^j-q_1^{\inf}-q_2^{\inf}\right)}\right\rceil\right\rbrace
}
such that the union of all the $\mathbf{pixel}(k,l)$ with $k\in[k_0,k_f]$ and $l\in[l_0,l_f]$ is a superset and an appropriate \textit{pixelization} of the corresponding rectangular block in the sum $\mathcal{F}_1^U(\varepsilon)\cup \mathcal{F}_2^U(\varepsilon)$\,. Repeating step 2 for every pair of $(i,j)$ in the M-sum, we readily obtain the appropriately pixelated superset of $\mathcal{F}_1^U(\varepsilon)\cup \mathcal{F}_2^U(\varepsilon)$\,.

\begin{remark}
The computational efforts in step 2 for the pixelization are minimal, since it requires only algebraic computation of the indexes \eqref{E:index} to obtain the overlap. Moreover, the step 1 is to be done only once for every sum $\mathcal{F}_1^U(\varepsilon)\cup \mathcal{F}_2^U(\varepsilon)$\,, and involve only minimal algebraic calculations. Therefore the computational complexity of the steps 1-2 combined is $\mathcal{O}(m_1m_2)$\,.
\end{remark}

%-----------------------------------------------------------------------
\subsection{Complete Algorithm}\label{S:summary}
%-----------------------------------------------------------------------

\vspace{-0.1in}

We are finally in a position to state the complete algorithm for scalable computation of the close superset of the M-sum of a heterogeneous collection of $N$ DERs,
\begin{align}
\mathcal{F}_\Sigma^U\supseteq\mathcal{F}_\Sigma:=\bigcup_{i=1}^N\mathcal{F}_i\,.
\end{align} 
The idea is to perform an $N$-step iterative process to compute the M-sum, where each step $t\in\lbrace 0,\dots,N-1\rbrace$ (we denote $t=0$ as the initial step) involves the computation of M-sum of a pair of domains. Let us de- note by $\mathcal{F}_\Sigma^{U(t+1)}$ the approximation of the M-sum $\mathcal{F}_\Sigma^{}$\,, at the completion of step-$t$\,, such that
\begin{align*}
\mathcal{F}_\Sigma^{U(1)}\subseteq\mathcal{F}_\Sigma^{U(2)}\subseteq\dots\subseteq\mathcal{F}_\Sigma^{U(N)}:=\mathcal{F}_\Sigma^U\,.
\end{align*} 
The algorithmic steps are outlined as below:

\subsubsection{\textsc{(Minkowski) Initialization Step-0:}}

At this step, we perform two tasks. The first task is to discretize each flexibility domain $\mathcal{F}_i$ into $\mathcal{F}_i^U(\varepsilon)$\,, a set of at most $M_p(\varepsilon)\times M_q(\varepsilon)$ rectangular blocks, as described in Sec.\,\ref{S:discrete}. Moreover we initialize $\mathcal{F}_\Sigma^{U(1)}$ as $\mathcal{F}_1^U(\varepsilon)$ (where the DERs are sorted in no particular order), i.e. we perform  the following:
\mysubeq{}{
\textbf{compute}\quad &\mathcal{F}_i^U(\varepsilon)\supseteq \mathcal{F}_i ~\forall i\,.\\
\textbf{initialize}\quad &\mathcal{F}_\Sigma^{U(1)}\gets \mathcal{F}_1^U(\varepsilon)\,.
}
Note that, by construction, each of $\mathcal{F}_i^U(\varepsilon)\,\forall i$ and $\mathcal{F}_\Sigma^{U(1)}$ is represented by the union of at most $M_p(\varepsilon)\times M_q(\varepsilon)$ rectangular blocks.

\subsubsection{\textsc{(Minkowski) Iterative Step}-$t\, (\!\geq\!1)$:}

In the following iterative steps-$t\,,\,t\!=\!1,2,\dots,N\!-\!1$\,, we perform the \textit{pixelization} on the M-sum of $\mathcal{F}_{t+1}^U(\varepsilon)$ and $\mathcal{F}_\Sigma^{U(t)}$\,. Applying the two-steps \textit{pixelization} procedure, outlined in Sec.\,\ref{S:pixel}, on the M-sum of $\mathcal{F}_{t+1}^U(\varepsilon)$ and $\mathcal{F}_\Sigma^{U(t)}$\,, we obtain 
\begin{align*}
\textbf{pixelize}\quad & \mathcal{F}_\Sigma^{U(t+1)}\supseteq \mathcal{F}_{t+1}^U(\varepsilon)\cup \mathcal{F}_\Sigma^{U(t)}
\end{align*}
By construction, $\mathcal{F}_\Sigma^{U(t+1)}$ is also a union of at most $M_p(\varepsilon)\!\times\! M_q(\varepsilon)$ rectangular blocks. Repeat until $t\!=\!N\!-\!1$. 
\begin{remark}
It is unnecessary to perform the pixelization at the final step ($t=N-1$) at which a simple M-sum of $\mathcal{F}_N^U(\varepsilon)$ and $\mathcal{F}_\Sigma^{U(N-1)}$ suffices.
\end{remark}

At the conclusion of $(N\!-\!1)$-th step, we set 
\begin{align}
\mathcal{F}_\Sigma^{U}\gets \mathcal{F}_\Sigma^{U(N)}\,.
\end{align}

%============================
\subsection{Analysis}
%============================

\vspace{-0.1in}

Recall that, by construction (enforced by both the \textit{initial discretization} as well as the \textit{pixelization} steps), the estimated flexibility domain ($\mathcal{F}_\Sigma^{U}$) is a guaranteed superset of the true M-sum ($\mathcal{F}_\Sigma^{}$). Moreover, by def- inition of the \textit{tightness} parameter $\varepsilon$\,, for every point $(p^*,q^*)$ in the calculated M-sum, there always exists a feasible point $(p,q)$ with $p\in[p^*\!-\!\varepsilon,p^*\!+\!\varepsilon]$  and $q\in[q^*\!-\!\varepsilon,q^*\!+\!\varepsilon]$\,, i.e. within the Euclidean distance of $\sqrt{2}\,\varepsilon$ from $(p^*,q^*)$. Furthermore, because of the \textit{pixelization} step, the \textit{worst-case} computational complexity of each iteration step is $\mathcal{O}\left(M_p(\varepsilon)^2 M_q(\varepsilon)^2\right)$\,, which brings the \textit{worst-case} complexity of the complete algorithm down to $\mathcal{O}\left(NM_p(\varepsilon)^2 M_q(\varepsilon)^2\right)$\,. The performance of the algo- rithm is summarized in the following result:

\begin{theorem}\label{T:worst_case}
(\textsc{Worst-case Complexity}) For any tightness parameter $\varepsilon$\,, the algorithm in Sec.\,\ref{S:summary} gen- erates a guaranteed superset of the true M-sum of $N$ arbitrary shaped 2D-domains, such that for every point $(p^*\!,q^*)$ in the calculated M-sum, there always exists a feasible point $(p,q)$ with 
\begin{align*}
\max\left(\left|p\!-\!p^*\right|,\left|q\!-\!q^*\right|\right)\!\leq\!\varepsilon\,.
\end{align*} 
%and $\left|q-q^*\right|\!\leq\!\varepsilon$\,.
%\begin{align*}
%p&\in[p^*-\varepsilon,p^*+\varepsilon] \text{ and } q\in[q^*-\varepsilon,q^*+\varepsilon]\,.
%\end{align*} 
Moreover, the worst-case complexity is given by $\mathcal{O}\left(NM_p(\varepsilon)^2 M_q(\varepsilon)^2\right)$\,.
\end{theorem}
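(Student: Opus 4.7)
The plan is to establish the three assertions bundled into Theorem~\ref{T:worst_case}: (i) the computed $\mathcal{F}_\Sigma^U$ is a guaranteed superset of the true M-sum $\mathcal{F}_\Sigma^{}$; (ii) every $(p^*,q^*)\in\mathcal{F}_\Sigma^U$ lies within the $\varepsilon$-box (in max-norm) of some feasible $(p,q)\in\mathcal{F}_\Sigma^{}$; and (iii) the total runtime is $\mathcal{O}\!\left(N M_p(\varepsilon)^2 M_q(\varepsilon)^2\right)$.

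Part (i) I would prove by induction on the iteration index $t$. The base case $t{=}1$ is immediate from the construction $\mathcal{F}_1^U(\varepsilon)\supseteq\mathcal{F}_1$ of the initial discretization in Sec.~\ref{S:discrete}. For the inductive step, Step~2 of the pixelization in Sec.~\ref{S:pixel} explicitly constructs $\mathcal{F}_\Sigma^{U(t+1)}$ as the union of pixels that together cover every rectangular block of $\mathcal{F}_{t+1}^U(\varepsilon)\uplus\mathcal{F}_\Sigma^{U(t)}$, hence $\mathcal{F}_\Sigma^{U(t+1)}\supseteq\mathcal{F}_{t+1}^U(\varepsilon)\uplus\mathcal{F}_\Sigma^{U(t)}$. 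Combining this with the monotonicity of Minkowski sum under set inclusion ($A\!\subseteq\! A',\,B\!\subseteq\! B'\Rightarrow A\uplus B\subseteq A'\uplus B'$) and the inductive hypothesis gives $\mathcal{F}_\Sigma^{U(t+1)}\supseteq\mathcal{F}_1\uplus\cdots\uplus\mathcal{F}_{t+1}$, closing the induction.

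Part (ii) is the more delicate piece and will be the main obstacle. The strategy is to show that every block $B$ in the final $\mathcal{F}_\Sigma^U$ (a) has both $p$- and $q$-widths bounded by $\varepsilon$, and (b) contains at least one point of $\mathcal{F}_\Sigma^{}$. The width bound at any intermediate iteration follows by direct inspection of the pixel size definition, which gives $\widehat{\varepsilon}_p\leq\varepsilon$ and $\widehat{\varepsilon}_q\leq\varepsilon$; for the final (non-pixelized) step one has to verify that the raw M-sum of two pixel-aligned domains still produces blocks of width at most $\varepsilon$, which reduces to the range-conservation identity $\sum_{i=1}^{2}(p_i^{\sup}\!-\!p_i^{\inf})\leq\sum_{i=1}^{N}(p_i^{\sup}\!-\!p_i^{\inf})$ implicit in the definition of $\widehat{\varepsilon}_p$. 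Existence of a feasible point inside $B$ is established by unrolling the recursion: each retained pixel at step $t$ overlaps some raw-sum rectangle $A\uplus C$ with $A$ a block of $\mathcal{F}_{t+1}^U(\varepsilon)$ and $C$ a pixel from step $t{-}1$; iterating the unrolling produces a chain $B_1,\dots,B_N$ of blocks, one from each $\mathcal{F}_i^U(\varepsilon)$, whose Minkowski sum intersects $B$. Choosing $(p_i,q_i)\in B_i\cap\mathcal{F}_i$ (non-empty provided the initial discretization retains only blocks that intersect $\mathcal{F}_i$) yields the feasible point $(p,q)=\sum_{i=1}^N(p_i,q_i)\in\mathcal{F}_\Sigma^{}$, which by the width bound lies within the $\varepsilon$-box of $(p^*,q^*)$.

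Part (iii) follows from the loop invariants maintained by the construction. At every iteration both $\mathcal{F}_{t+1}^U(\varepsilon)$ and $\mathcal{F}_\Sigma^{U(t)}$ are unions of at most $M_p(\varepsilon)M_q(\varepsilon)$ rectangular blocks, so their raw M-sum contains at most $\bigl[M_p(\varepsilon)M_q(\varepsilon)\bigr]^2$ block pairs. Step~2 of the pixelization handles each pair in $\mathcal{O}(1)$ via the index formula~\eqref{E:index}, giving per-iteration cost $\mathcal{O}\!\left(M_p(\varepsilon)^2 M_q(\varepsilon)^2\right)$, and summing over the $N{-}1$ iterations delivers the stated total. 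The substantive difficulty is concentrated in part (ii), where one has to carefully account for how the pixelization-induced expansion of blocks propagates through successive iterations and verify that the back-tracing of pixels through the recursion indeed terminates in a bona-fide feasible point of $\mathcal{F}_\Sigma^{}$, despite the fact that pixelization is defined via overlap with the \emph{discretized} rather than the true Minkowski sum.
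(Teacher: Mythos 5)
Your parts (i) and (iii) are sound and coincide with the paper's own (very terse) justification: the superset property follows by induction using monotonicity of the Minkowski sum under inclusion, and the complexity bound follows from the $\mathcal{O}(1)$ per-pair cost of the index formula \eqref{E:index} applied to at most $[M_p(\varepsilon)M_q(\varepsilon)]^2$ block pairs per iteration. (The paper offers nothing beyond the assertion in the paragraph preceding the theorem, so on these two points you are, if anything, more careful than the source.)

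The gap is in part (ii), exactly at the step you flag as delicate: ``iterating the unrolling produces a chain $B_1,\dots,B_N$ \dots whose Minkowski sum intersects $B$.'' This does not follow. One level of unrolling is fine: $B$ meets $A_N\uplus D_{N-1}$ and $D_{N-1}$ meets $A_{N-1}\uplus D_{N-2}$; but the point of $D_{N-1}$ realizing the first intersection need not be the point lying in $A_{N-1}\uplus D_{N-2}$, so $B$ need not meet $A_N\uplus A_{N-1}\uplus D_{N-2}$, and the chain breaks at the second level. Quantitatively, a retained pixel at iteration $t$ can extend by up to one pixel width $\widehat{\varepsilon}_p^{(t)}\leq\varepsilon$ beyond the raw-sum rectangle that caused it to be retained (the ceiling in \eqref{E:index} can push the left grid line a full pixel below the block's left edge), and this overhang is fed into the next raw sum and extended again; the displacements accumulate over the $N-1$ iterations, giving a worst-case deviation of order $\sum_t\widehat{\varepsilon}_p^{(t)}=\Theta(N\varepsilon)$ rather than $\varepsilon$. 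A concrete 1D instance: $\mathcal{F}_1=\{0\}\cup\{10\}$, $\mathcal{F}_2=\mathcal{F}_3=\mathcal{F}_4=\{0\}\cup\{0.5\}$, $\varepsilon=1$, so $W=11.5$; the step-1 pixel containing the feasible point $10$ is $[9.13,\,10.04]$, and at step 2 (pixel size $11/11.5$) formula \eqref{E:index} retains the pixel $[8.61,\,9.57]$, whose left end is at distance $1.39>\varepsilon$ from every feasible point. So the $\varepsilon$-tightness claim cannot be closed by the back-tracing argument for the algorithm exactly as written; it needs an extra ingredient, e.g.\ clipping retained pixels to the raw sum, anchoring each pixelization to a certified feasible point inside each raw block, or shrinking the per-step pixel sizes so the accumulated overhang stays below $\varepsilon$. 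Your subsidiary assumption that the initial discretization keeps only blocks meeting $\mathcal{F}_i$ is indeed needed and unstated in the paper, but it is not the missing piece.
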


While Theorem\,\ref{T:worst_case} describes the worst-case complex- ity of the proposed algorithm in the generic case, we discuss some specific scenarios in the rest of this section.

\subsubsection{\textsc{Special Case I}:}

Consider the case when the DERs in the ensemble are similarly sized, i.e. 
\mysubeq{E:homo}{
\forall i\,:\quad p_i^{\sup}-p_i^{\inf} &= \Delta p & \quad\text{(i.e. uniform)}\\
q_i^{\sup}-q_i^{\inf} &= \Delta q & \quad\text{(i.e. uniform)}
}
\begin{proposition}\label{P:uniform}
In the specific case outlined in \eqref{E:homo}, the computational complexity is $\mathcal{O}\left(N^3/\varepsilon^4\right)$\,.
\end{proposition}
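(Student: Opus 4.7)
The plan is to invoke Theorem \ref{T:worst_case} and specialize its worst-case bound to the uniform regime described by \eqref{E:homo}.

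First, I would evaluate $M_p(\varepsilon)$ and $M_q(\varepsilon)$ from \eqref{E:Mpq} under \eqref{E:homo}. Because every DER has the identical $p$- and $q$-ranges $\Delta p$ and $\Delta q$, the aggregate sums in \eqref{E:Mpq} collapse to $N\Delta p$ and $N\Delta q$, giving $M_p(\varepsilon)=\lceil N\Delta p/\varepsilon\rceil$ and $M_q(\varepsilon)=\lceil N\Delta q/\varepsilon\rceil$. Treating $\Delta p$ and $\Delta q$ as $\mathcal{O}(1)$ constants, both quantities are $\mathcal{O}(N/\varepsilon)$. This is the piece the rest of the argument plugs in.

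Next, I would revisit the per-iteration pixelization cost of Sec.\,\ref{S:summary} and argue that, under \eqref{E:homo}, it is tighter than the generic $\mathcal{O}(M_p^2 M_q^2)$ bound used in Theorem \ref{T:worst_case}. The mechanism is that each individual DER's feasibility region sits in a fixed $\Delta p\times\Delta q$ box that is independent of $N$, so its initial discretization $\mathcal{F}_i^U(\varepsilon)$ (from Sec.\,\ref{S:discrete}) has a block count $m_i$ controlled by the per-DER range and the block size $\varepsilon/N$ rather than by the aggregate range. Combined with the pixelization guarantee that the accumulated partial M-sum $\mathcal{F}_\Sigma^{U(t)}$ always has at most $M_p(\varepsilon)\,M_q(\varepsilon)$ blocks, the per-iteration product $m_1\,m_2$ scales substantially more slowly than the naive $M_p^2 M_q^2$.

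Finally, I would aggregate this tightened per-iteration cost across the $N$ iterations of the algorithm in Sec.\,\ref{S:summary} and simplify, using the $\mathcal{O}(N/\varepsilon)$ bounds for $M_p,\,M_q$ established in Step 1, to obtain the claimed $\mathcal{O}(N^3/\varepsilon^4)$ complexity. The main obstacle I anticipate is in Step 2: quantifying precisely the per-DER block count $m_i$ under \eqref{E:homo} and verifying that its interaction with the pixelized accumulated M-sum yields a per-iteration cost on the order of $\mathcal{O}(N^2/\varepsilon^4)$, rather than the coarser $\mathcal{O}(N^4/\varepsilon^4)$ that would arise from a direct substitution into Theorem \ref{T:worst_case}. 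Once that sharpened per-step bound is in hand, the summation over $N$ iterations is immediate.
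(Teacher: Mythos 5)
Your skeleton matches the paper's: specialize $M_p,M_q=\mathcal{O}(N/\varepsilon)$ under \eqref{E:homo}, tighten the per-iteration cost below the generic $\mathcal{O}(M_p^2M_q^2)$, and sum over the $N$ iterations. But the decisive counting step --- the one you yourself flag as ``the main obstacle'' --- is exactly the content of the proposition, and the one concrete mechanism you offer for it does not work. You say each DER's block count $m_i$ is controlled by the per-DER range and the block size $\varepsilon/N$; over a $\Delta p\times\Delta q$ box with bins of size $\varepsilon/N$ that gives $m_i=\Theta\bigl((N\Delta p/\varepsilon)\cdot(N\Delta q/\varepsilon)\bigr)=\Theta(M_pM_q)$, i.e.\ no saving at all, so the per-iteration product $m_1m_2$ stays at $\Theta(M_p^2M_q^2)=\Theta(N^4/\varepsilon^4)$ and you reproduce the $\mathcal{O}(N^5/\varepsilon^4)$ bound you are trying to beat.

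The paper's proof closes the gap with two counting facts you do not state. First, each individual DER is counted as occupying only $\Delta p\,\Delta q/\varepsilon^2$ blocks --- effectively bin size $\varepsilon$, not $\varepsilon/N$, over its own $\Delta p\times\Delta q$ footprint --- so $m_2=\mathcal{O}(1/\varepsilon^2)$, independent of $N$; this single factor-of-$N^2$ saving is where the entire improvement lives. Second, the accumulated M-sum of the first $n$ DERs spans only an $n\Delta p\times n\Delta q$ region and hence needs only $n^2\Delta p\,\Delta q/\varepsilon^2\leq M_pM_q$ pixels, so the step-$n$ cost is proportional to $n^2/\varepsilon^4$ and $\sum_{n=1}^{N}n^2/\varepsilon^4=\mathcal{O}(N^3/\varepsilon^4)$. (In fairness, your $\varepsilon/N$ reading is the literal bin size prescribed in Sec.\,\ref{S:discrete}; the paper's proof implicitly switches to the coarser $\varepsilon$-sized bins when counting the individual domains, and without that choice --- or some substitute argument --- the claimed $N^3$ does not follow.)
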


\begin{proof}
According  to the assumption of \textit{uniformity}, the flexibility domain of each DER in the ensemble is discretized into $\Delta p/\varepsilon$-bins on $p$-axis and $\Delta q/\varepsilon$-bins on $q$-axis, i.e. a total of $\left(\Delta p\,\Delta q\right)/\varepsilon^2$ rectangular blocks (or, pixels). The M-sum of first two DERs (in no particular order) would result in (at most) $\left(2\Delta p/\varepsilon\right)\times \left(2\Delta q/\varepsilon\right)$-pixels. It is not difficult to see that the M-sum of first $n< N$ DERs can be similarly represented by $n^2\left(\Delta p\,\Delta q\right)/\varepsilon^2$ pixels. Note that, as per \eqref{E:Mpq} and \eqref{E:homo}
\begin{align*}
M_p(\varepsilon)\,M_q(\varepsilon)\geq n^2\left(\Delta p\,\Delta q\right)/\varepsilon^2\quad\forall n\leq N\,.
\end{align*}
Complexity of the computation of the incremental M- sum of the $(n+1)$-th DER and the M-sum of the first $n$ DERs is proportional to $n^2/\varepsilon^4$\,. Complexity of computing the M-sum of $N$ DERs is then proportional to $\sum_{i=1}^Nn^2/\varepsilon^4$, resulting in a complexity of $\mathcal{O}\left(N^3/\varepsilon^4\right)$\,.\hfil\hfill\qed
\end{proof}

It can be argued that the above result holds for many practical scenarios when the ensemble has similarly sized DERs, while guaranteeing a desired accuracy.

\subsubsection{\textsc{Special Case II}:}

In certain scenarios, due to limitations on the computing resources, it might be necessary to put a bound on the number of bins used to pixelize the $p$- and $q$-axes. For example, let us consider the scenario when the number of bins on the $q$-axis is fixed to $\overline{M}_q$ while the bins on the $q$-axis are upper bounded by $\overline{M}_p$\,, i.e.
\begin{align}\label{E:halfbound}
M_q = \overline{M}_q\,\text{ but }M_p\leq\overline{M}_p\,,
\end{align}

\begin{proposition}\label{P:bounded}
In the specific case when \eqref{E:halfbound} and \eqref{E:homo} hold, the complexity is: 1) $\mathcal{O}(N^2\!/\varepsilon^2)$ for sufficient- ly large $\overline{M}_p$\,, and
2) $\mathcal{O}(N/\varepsilon)$ when $\overline{M}_p$ is small\,. Moreover, the tightness of the calculated M-sum deteriorates linearly with the size of ensemble.
\end{proposition}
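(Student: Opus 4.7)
The plan is to mirror the incremental pixel-counting argument from the proof of Proposition \ref{P:uniform}, adapted to the new setting in which the $q$-axis bin count is frozen at $\overline{M}_q$ throughout and the $p$-axis is capped at $\overline{M}_p$. I would proceed in three moves: first derive the evolving pixel sizes at each iterate, then bound the per-iteration block counts of both the partial aggregate and of a single DER, and finally sum the step costs after splitting into the two regimes distinguished by whether $\overline{M}_p$ binds.

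For the pixel sizes, under the uniformity \eqref{E:homo} the partial aggregate $\mathcal{F}_\Sigma^{U(t+1)}$ spans a $(t+1)\Delta p \times (t+1)\Delta q$ rectangle. Forcing exactly $\overline{M}_q$ bins on the $q$-axis yields $\widehat{\varepsilon}_q^{(t)} = (t+1)\Delta q/\overline{M}_q$, which is linear in $t$ and equals $N\Delta q/\overline{M}_q$ at termination $t = N-1$. This immediately gives the final clause of the proposition, that the tightness of the calculated M-sum deteriorates linearly in $N$. When $\overline{M}_p$ also binds, an analogous $\widehat{\varepsilon}_p^{(t)} = (t+1)\Delta p/\overline{M}_p$ holds on the $p$-axis.

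For the complexity, the per-iteration cost at step $t$ is the cross-product of the number of rectangular blocks in $\mathcal{F}_\Sigma^{U(t)}$ and in $\mathcal{F}_{t+1}^U(\varepsilon)$ before pixelization. Following the same accounting as in Proposition \ref{P:uniform}, with $\overline{M}_q$, $\Delta p$, $\Delta q$ absorbed into $\mathcal{O}(\cdot)$ constants, a single discretized DER carries $\mathcal{O}(1/\varepsilon)$ blocks. In the first regime, where $\overline{M}_p$ is large enough never to bind, the partial aggregate retains $\mathcal{O}(t/\varepsilon)$ blocks, so the per-step cost is $\mathcal{O}(t/\varepsilon^2)$, and summing $t$ from $1$ to $N-1$ yields the advertised $\mathcal{O}(N^2/\varepsilon^2)$. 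In the second regime, where $\overline{M}_p$ binds from the very first iteration, the partial aggregate is immediately saturated at $\overline{M}_p\overline{M}_q = \mathcal{O}(1)$ blocks, so each step costs only $\mathcal{O}(1/\varepsilon)$ and the $N-1$ iterations sum to $\mathcal{O}(N/\varepsilon)$.

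The main technical obstacle I foresee is getting the single-DER block accounting correct once the $q$-axis is frozen: the initial discretization condition in Sec.\,\ref{S:discrete} was derived with $M_q$ growing linearly in $N$, and one has to verify that forcing $M_q = \overline{M}_q$ still produces a per-DER block count of order $1/\varepsilon$ as assumed above. An intermediate value of $\overline{M}_p$ that becomes binding only partway through the iteration can be handled by splitting the sum at the transition step and bounding each piece by the appropriate regime; monotonicity of the step cost in the aggregate block count ensures the overall complexity is dominated by the larger of the two regime bounds. The tightness claim itself follows directly from the pixel-size expression, so no additional combinatorial work is required.
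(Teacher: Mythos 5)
Your proposal is correct and follows essentially the same route as the paper's proof: a per-DER block count of order $\Delta p\,\overline{M}_q/\varepsilon$, a partial aggregate that grows like $n\Delta p\,\overline{M}_q/\varepsilon$ until it saturates at the $\overline{M}_p$ cap, per-step cost as the product of the two counts, and the tightness bound $\varepsilon \geq \max\left(N\Delta p/\overline{M}_p,\,N\Delta q/\overline{M}_q\right)$ giving the linear deterioration in $N$. Your explicit handling of an intermediate $\overline{M}_p$ by splitting the sum at the transition step is a small refinement the paper omits, but it does not change the argument.
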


\begin{proof}
Follows similarly as the one for Proposition\,\ref{P:uniform}. Note that the flexibility domain of each DER can be discretized in a total of $\left(\Delta p\,\overline{M}_q\right)\!/\varepsilon$ pixels. The M- sum of first $n$ DERs can be represented by 
\begin{align*}
\left\lbrace\begin{array}{c}{n\left(\Delta p\,\overline{M}_q\right)} /{\varepsilon} \,\text{ pixels if }n<\overline{M}_p\,\varepsilon/\Delta p\\
\\
\text{and }\overline{M}_p \,\text{ pixels if otherwise.}\end{array}\right.
\end{align*} 
Therefore, for sufficiently large $\overline{M}_p$\,, the complexity of computing the M-sum of N DERs can be shown to be proportional to $\sum_{i=1}^Nn/\varepsilon^2$\,, resulting in a complexity of $\mathcal{O}\!\left(N^2\!/\varepsilon^2\right)$\,. On the other hand, for small $\overline{M}_p$\,, the complexity of computing the M-sum of N DERs can be shown to be proportional to $\sum_{i=1}^N1/\varepsilon$\,, resulting in a complexity of $\mathcal{O}\!\left(N\!/\varepsilon\right)$\,. For the tightness argument, note from \eqref{E:Mpq} and \eqref{E:homo},
\begin{align*}
\varepsilon\geq \max\left(N\Delta p/\overline{M}_p,\,N\Delta q/\overline{M}_q\right)\,,
\end{align*}
i.e. for every point in the calculated M-sum, there exists a feasible point within some Euclidean distance that scales linearly with $N$\,.\hfill\hfill\qed
\end{proof}

%\subsubsection{\textsc{Special Case III}:}
%
%Finally, we consider as a special case an intermediate scenario where the number of bins on the $p$-axis is fixed to $\overline{M}$ while the bins on the $q$-axis are allowed to be $M_q(\varepsilon)$ for some chosen tightness parameter value $\varepsilon$\,, i.e.
%\begin{align}\label{E:halfbound}
%M_q = M_q(\varepsilon)\text{ but }M_p=\overline{M}\,.
%\end{align}
%
%\begin{proposition}
%In the specific scenario when \eqref{E:halfbound} and \eqref{E:homo} hold, the computational complexity is $\mathcal{O}\!\left(N^2\!/\varepsilon^2\right)$\,.
%\end{proposition}
%
%\begin{proof}
%Follows similarly as the one for Proposition\,\ref{P:uniform}. Note that the flexibility domain of each DER can be discretized in a total of $\left(\Delta q\,\overline{M}\right)\!/\varepsilon$ pixels. The M-sum of first $n<N$ DERs can be represented by $n\left(\Delta q\,\overline{M}\right)\!/\varepsilon$ pixels. Therefore, the complexity of computing the M- sum of N DERs can be shown to be proportional to $\sum_{i=1}^Nn/\varepsilon^2$\,, resulting in a complexity of $\mathcal{O}\!\left(N^2\!/\varepsilon^2\right)$\,.
%\end{proof}

%============================
\section{Numerical Results}\label{S:result}
%============================

\vspace{-0.1in}

\begin{figure}[h]
\centering
\captionsetup{justification=centering}
\subfigure[a population of 5 air-conditioners]{
\includegraphics[scale=0.42]{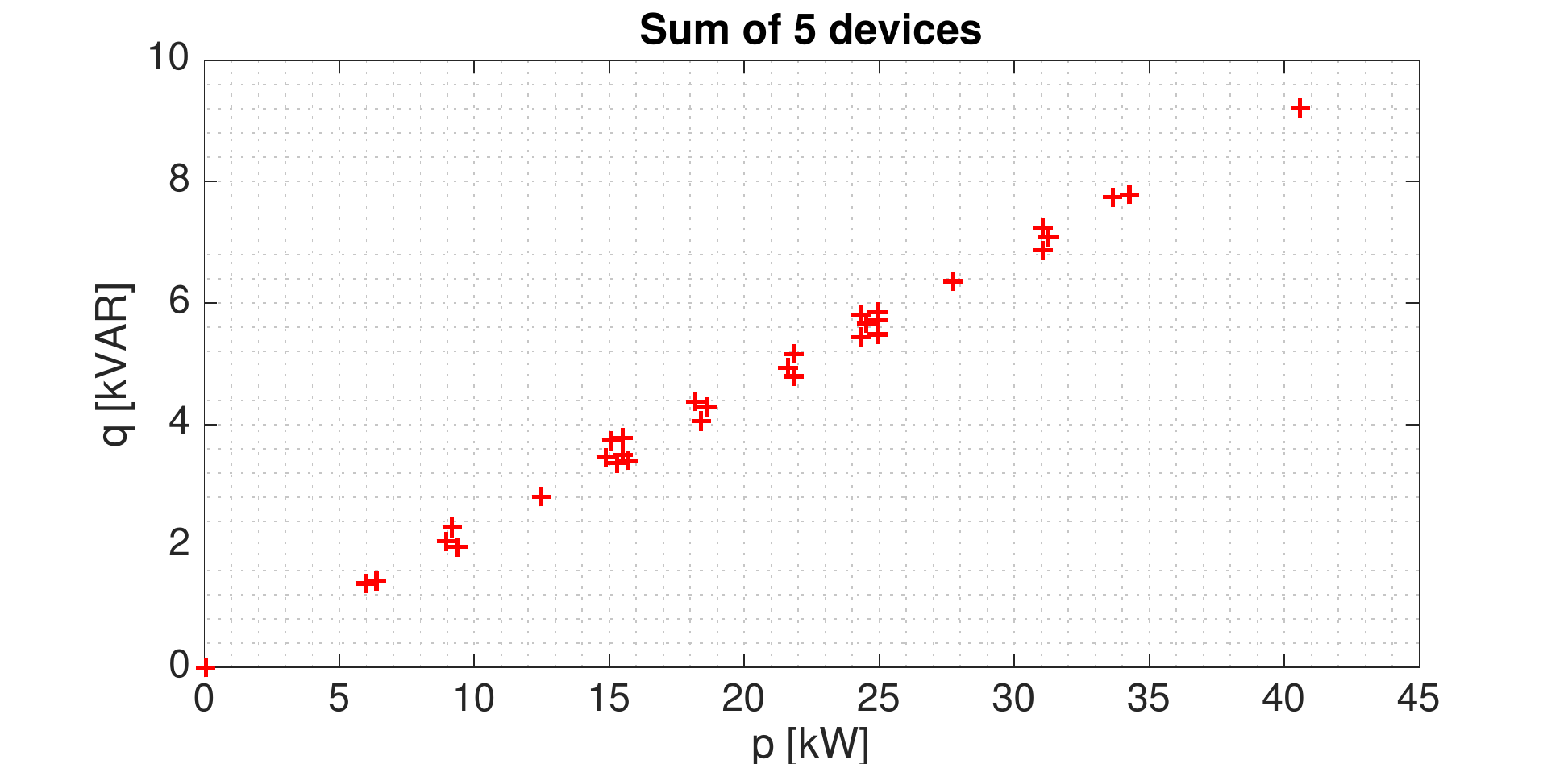}\label{F:hvac5}
}
\hspace{0.1in}
\subfigure[a population of 3 wind inverters]{
\includegraphics[scale=0.42]{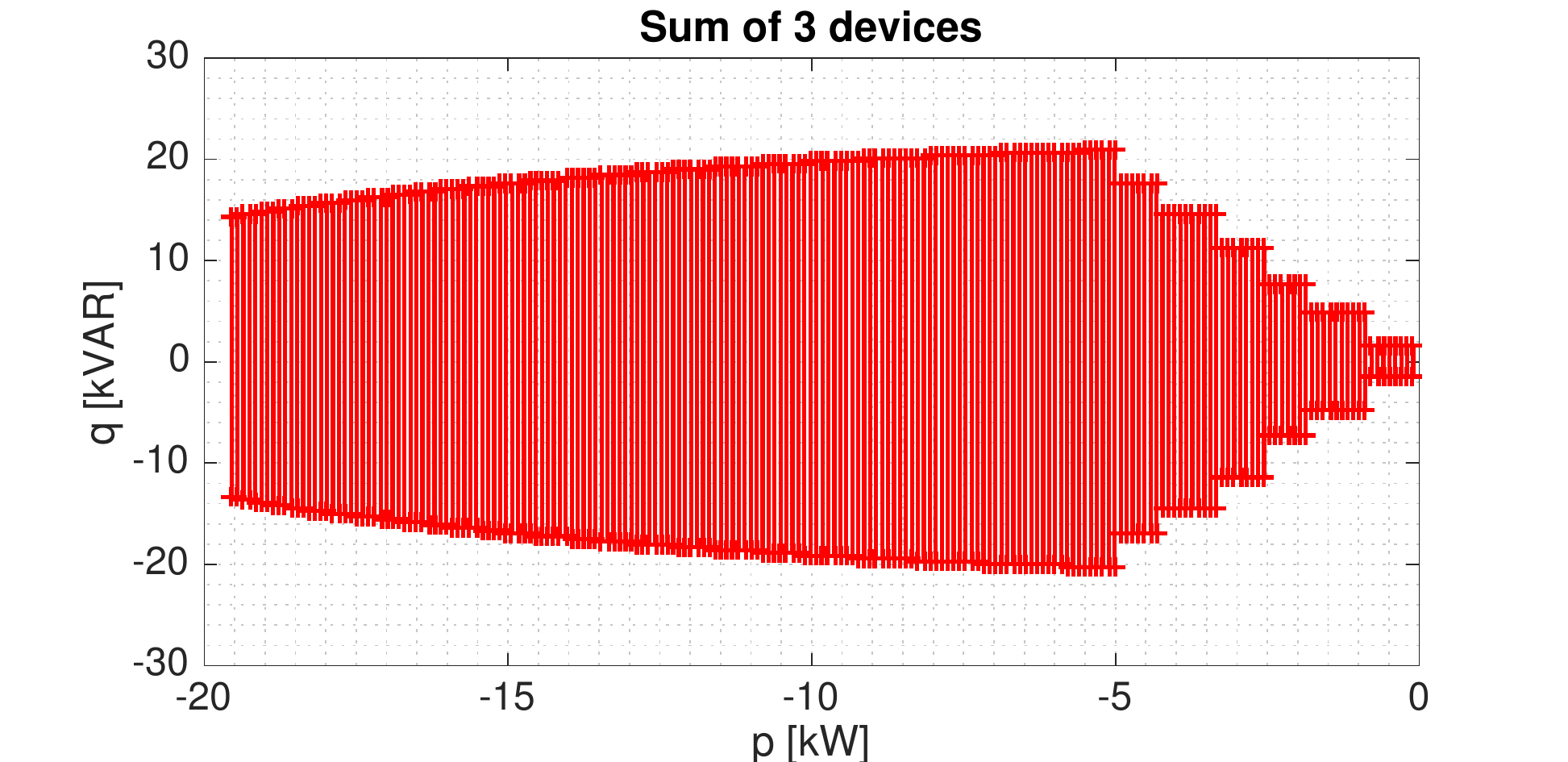}\label{F:wind3}
}
\hspace{0.1in}
\subfigure[a total of 5 PVs and water heaters]{
\includegraphics[scale=0.42]{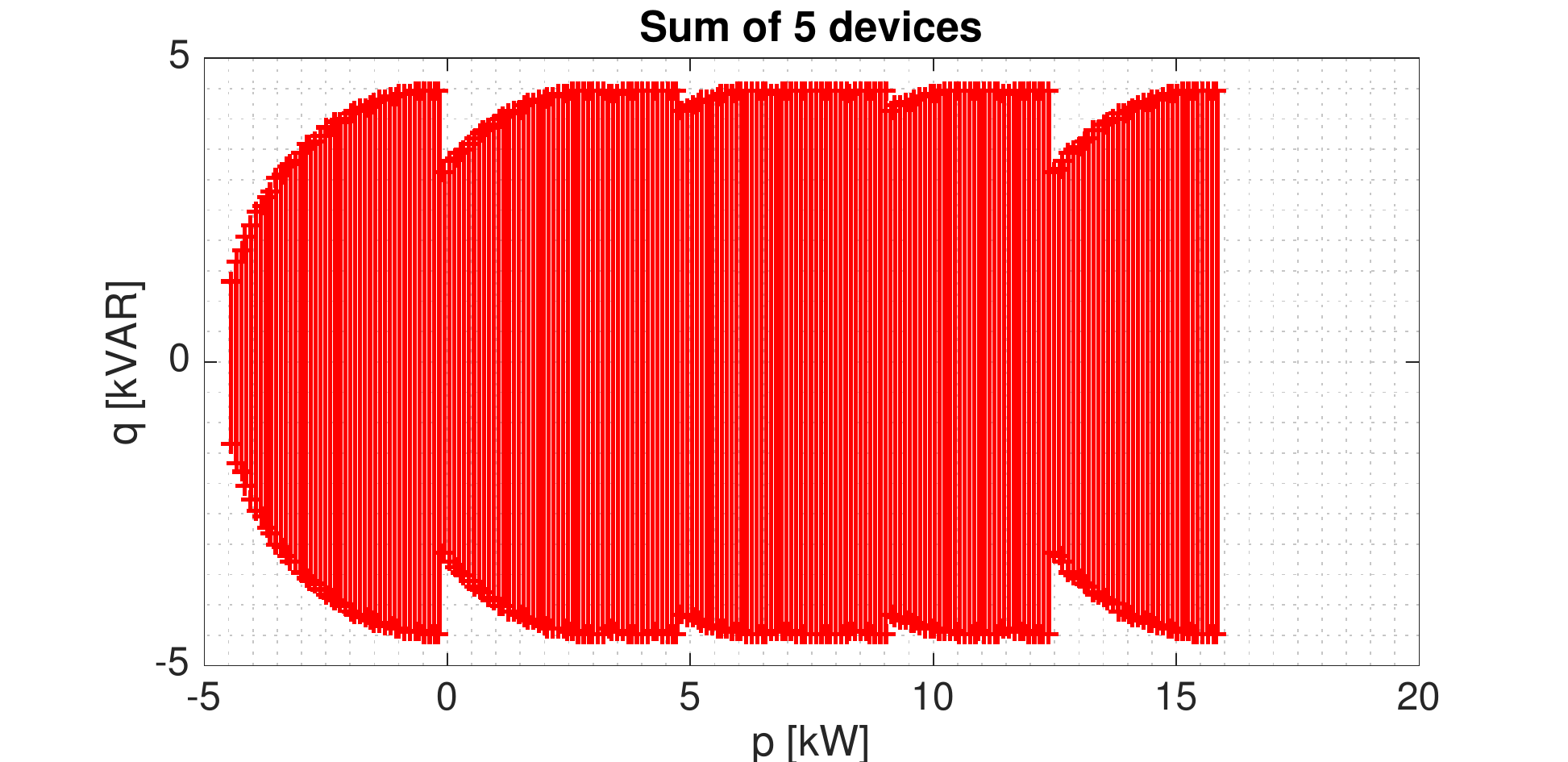}\label{F:ewh_PV_5}
}
\caption[Optional caption for list of figures]{Aggregated flexibility domains for various arbitrary sample cases.}
\label{F:sample}
\end{figure}

We start by illustrating in Figure\,\ref{F:sample} how M-sum looks like in some arbitrarily generated ensemble of DERs (including air-conditioners, wind inverters, electric water- heaters and solar photovoltaic inverters). In order to better demonstrate the dependence of the computational complexity of the proposed algorithm on the ensemble size and the tightness parameter, we run a set of tests. First we randomly choose a 10 DER ensemble with of one air-conditioners, two water-heaters, two batteries, two wind inverters and three photovoltaic inverters. This population is then replicated multiple times to create ensembles of larger size (in the multiples of 10). Moreover, the tightness parameter $\varepsilon$ is varied from $0.01$ kW (or, kVAR) to $0.64$ kW (or, kVAR). The computation times are calculated by running the algorithm for various combinations of ensemble size and tightness parameters. Moreover, we also run the tests for two different cases: one in which $\overline{M}_p$ (in Proposition\,\ref{P:bounded}) is chosen to be 600, and another in which it is chosen to be 4000 (very high). Figures\,\ref{F:600} and \ref{F:Inf} present the test results. It can be seen that the results align with the analysis summarized in Proposition\,\ref{P:bounded}. Specifically, the computational complexity is seen to be roughly linear with the ensemble size when $\overline{M}_p$ is set to 600 (small), while it is $\mathcal{O}(N^2)$ when $\overline{M}_p$ is set to 4000 (large).

\begin{figure*}[h]
\centering
\captionsetup{justification=centering}
\subfigure[computation time vs. tightness parameter]{
\includegraphics[scale=0.42]{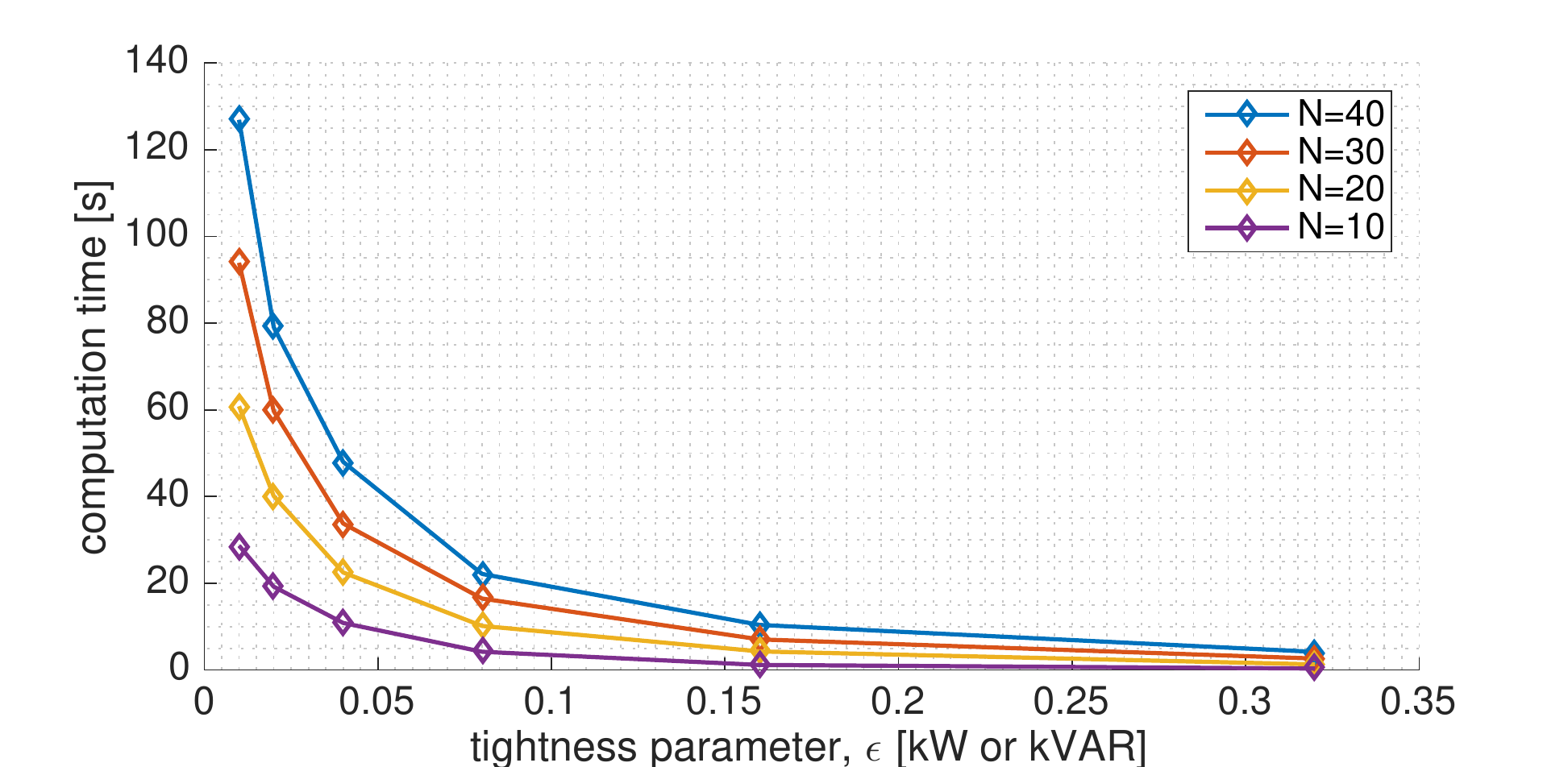}\label{F:600_eps}
}
\hspace{-0.3in}
\subfigure[computation time vs. ensemble size]{
\includegraphics[scale=0.42]{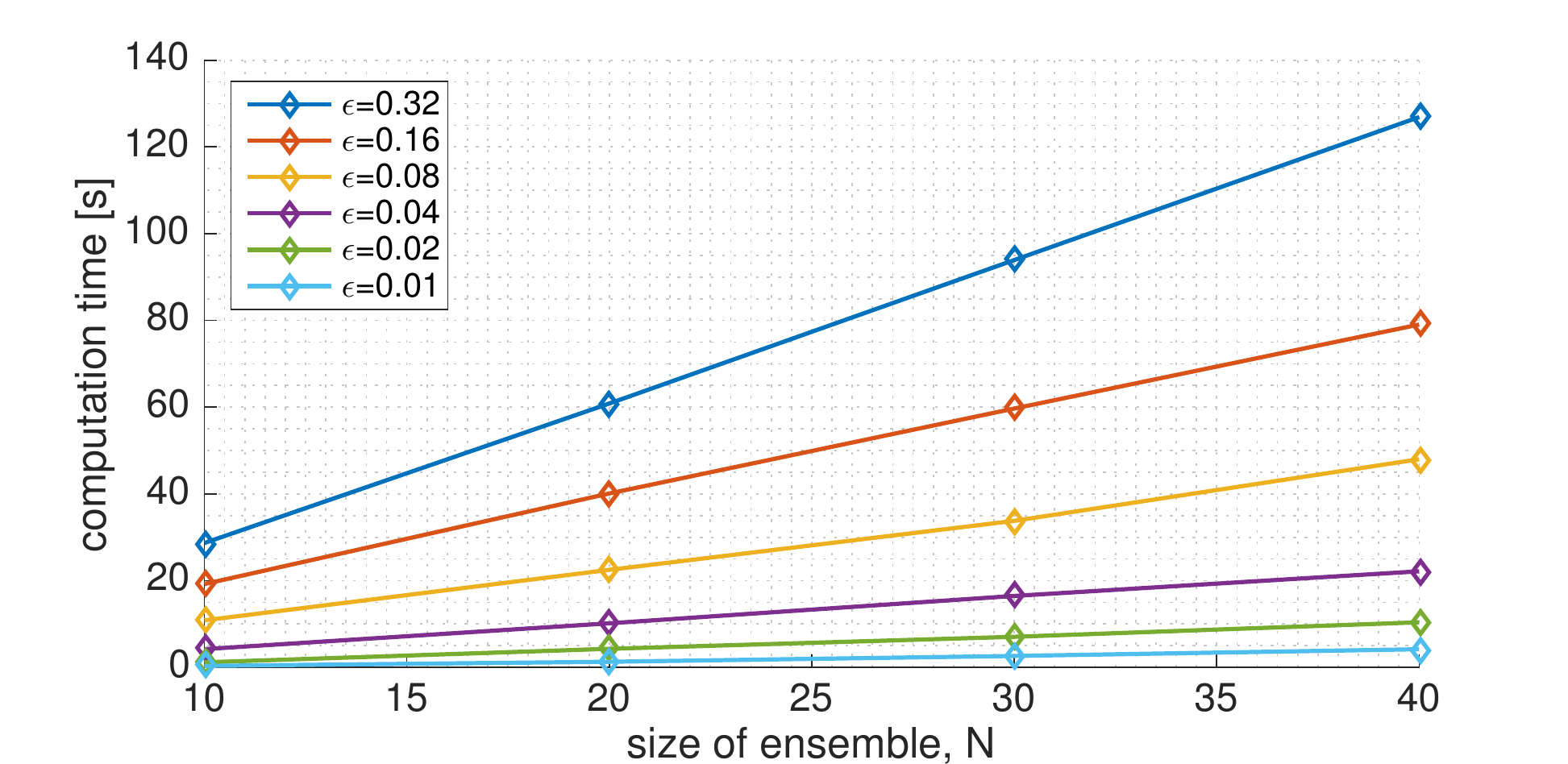}\label{F:600_N}
}
%\hspace{-0.3in}
%\subfigure[a total of 5 PVs and water heaters]{
%\includegraphics[scale=0.42]{ewh_PV_5.eps}\label{F:ewh_PV_5}
%}
\caption[Optional caption for list of figures]{Computational complexity with respect to ensemble size ($N$) and tightness parameter ($\varepsilon$), under the scenario \eqref{E:halfbound} when $\overline{M}_p$ is set to 600 (small).}
\label{F:600}
\end{figure*}
\begin{figure*}[h]
\centering
\captionsetup{justification=centering}
\subfigure[computation time vs. tightness parameter]{
\includegraphics[scale=0.42]{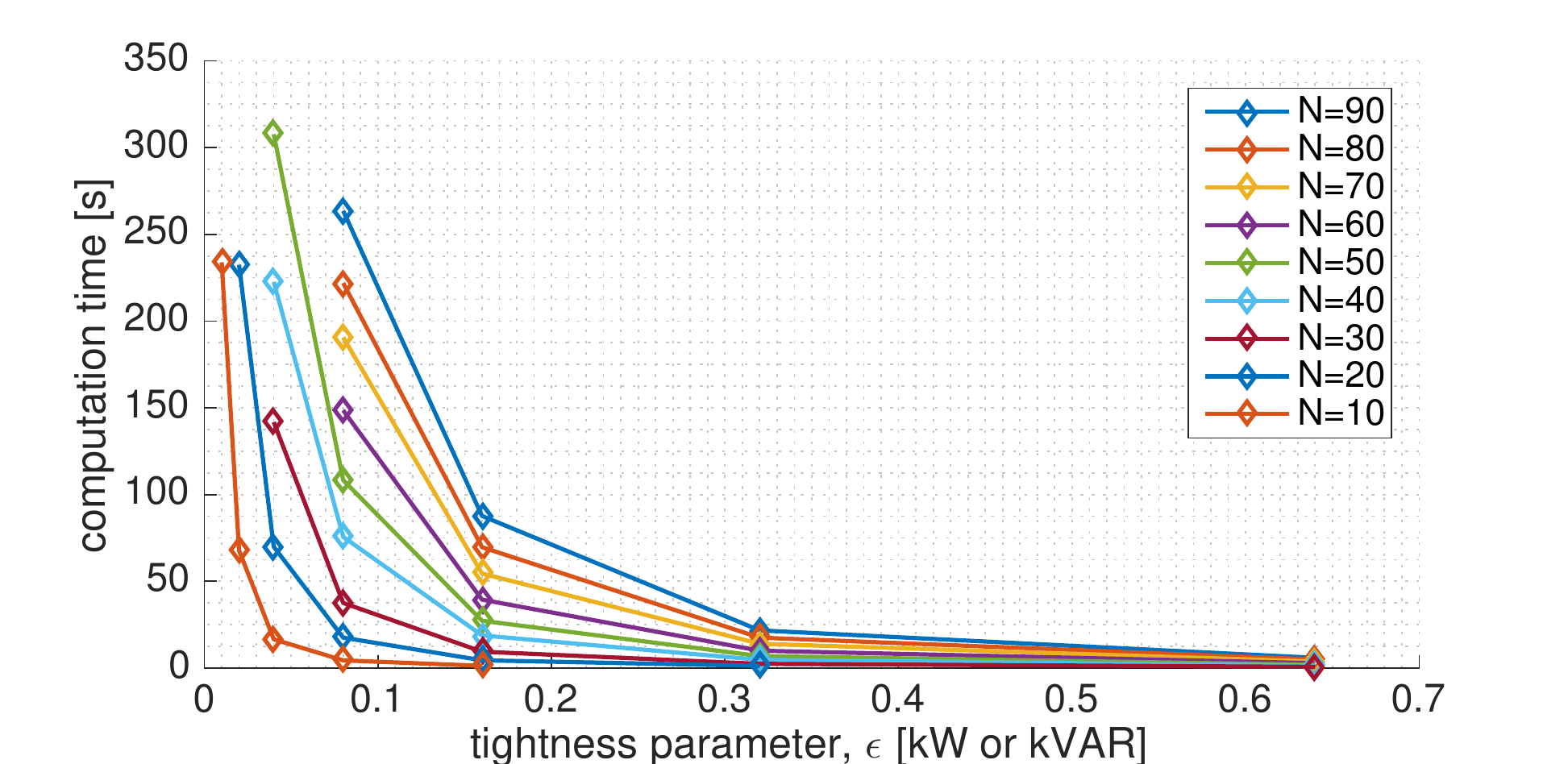}\label{F:Inf_eps}
}
\hspace{-0.3in}
\subfigure[computation time vs. ensemble size]{
\includegraphics[scale=0.42]{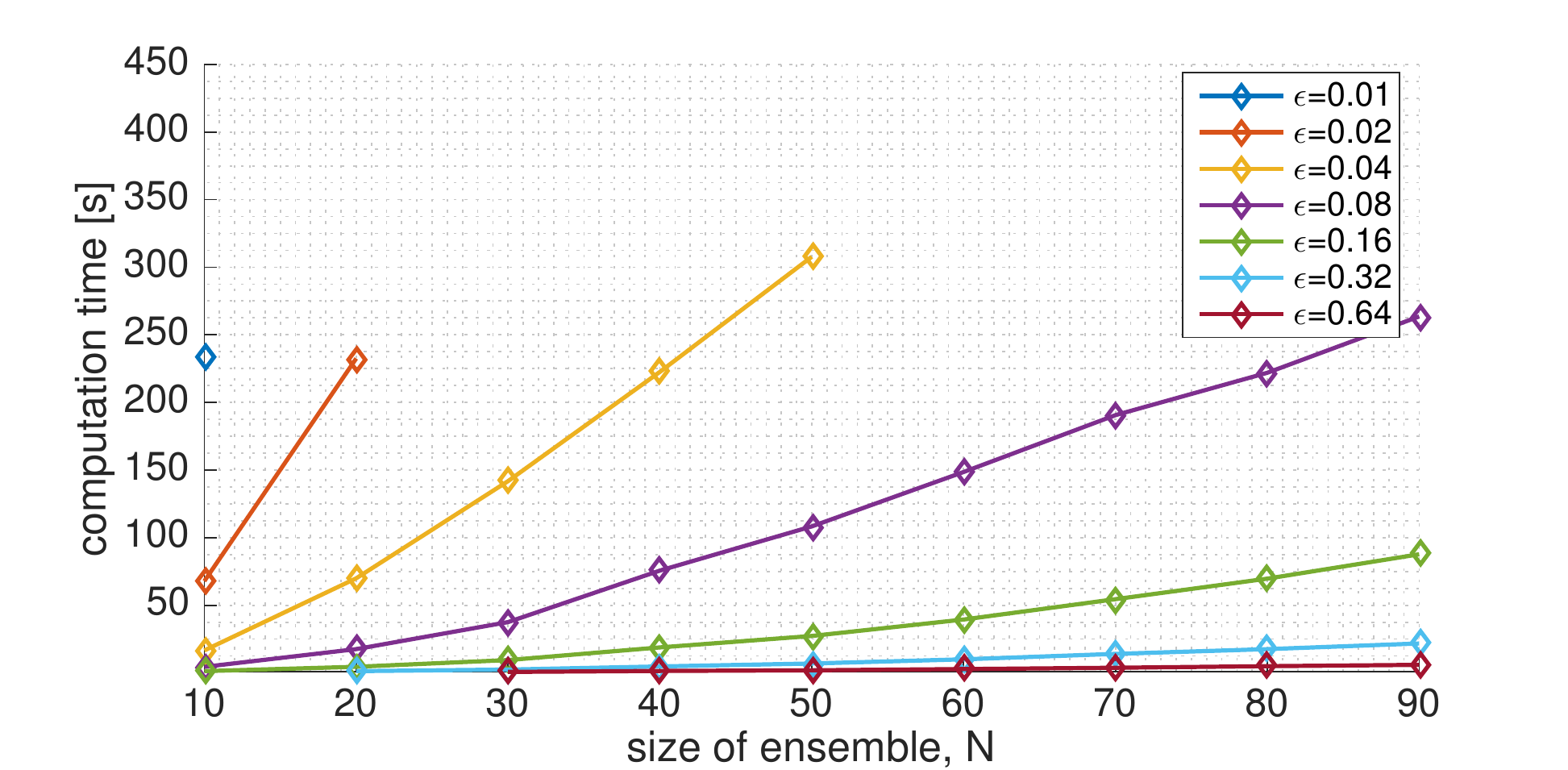}\label{F:Inf_N}
}
%\hspace{-0.3in}
%\subfigure[a total of 5 PVs and water heaters]{
%\includegraphics[scale=0.42]{ewh_PV_5.eps}\label{F:ewh_PV_5}
%}
\caption[Optional caption for list of figures]{Computational complexity with respect to ensemble size ($N$) and tightness parameter ($\varepsilon$), under the scenario \eqref{E:halfbound} when $\overline{M}_p$ is set to 4000 (large).}
\label{F:Inf}
\end{figure*}

Next, we generate representative DER scenarios, that represent real world test cases to demonstrate the proposed DER flexibility aggregation methodology. The proposed approach entails the following three steps:

\subsection{Identify contributing factors}

\vspace{-0.1in}

We include five different types of DERs that were discussed previously - heating, ventillation and air-conditioning (HVAC) units, electric water heaters (EWHs), solar photovoltaic (PV) inverters, wind generators, battery electric vehicles (BEVs). We limit ourselves to the consideration of three factors - income level, climate type, and economic policies (with regard to incentives for renewables) - that can potentially affect the penetration of each of these DERs in a certain geographical region. In particular, we assume that the income level and climate type affect the penetration of HVAC and EWHs. For renewable DERs, i.e. PV and wind generators, we assume that climate type and level of incentives determine their prevalence. Lastly, for BEVs, we consider income level and level of incentives as the primary factors affecting their penetration. 

We create combinations of the three factors ident- ified above, such that each combination represents a DER scenario. For this we consider the following bi- nary variations of each of these factors: 1) climate type - \textit{mild or extreme}; 2) income level: \textit{high or low}; and 3) incentive level: \textit{high or low}.
To simplify the analysis further, we assume that the climate type classification is with regard to heat and humidity, and correspondingly the HVAC type considered refers to space cooling devices. The above variations result in a total of $2^3 = 8$ scenarios.  

\subsection{Generate a DER mix per scenario}

\vspace{-0.1in}

For each of the above scenarios, we identify a relative mix of DERs expressed as a percentage assignment for each type of DER present in the mix. We use several previous studies and surveys to estimate these assigments, representing a futuristic scenario with a generally high level of DERs. For HVAC, we use data from \textit{Residential Energy Consumption Survey} (RECS) \cite{recs} for quantifying the impact of income level, and the climate zone definitions in the Builing America (BA) Program \cite{ba} for climate type classification. For renewable DERs (solar and wind), we use data from the \textit{Annual Energy Outlook} (AOE) 2017 \cite{aeo} which provides projected levels of solar and wind capacity installations expected to be in place by 2050, for 22 US regions classified by the the \textit{North American Electric Reliability Corporation} (NERC) \cite{ner}. For each of these regions, the climate type classification was based on BA, while the incentive level was assigned based on the \textit {Database of State Incentives for Renewables and Efficiency} (DSIRE) \cite{dsire}. For EWHs, similar to HVAC, data from RECS and BA was used. For BEVs, data from \cite{net-benefit} was used to determine the incentive level for 15 states in the US. The income level for each of these states was obtained from \cite{census}. These factors were then mapped to projected sales of BEVs in these states in 2040 based on \cite{forbes}. 
The resulting scenarios are shown in Table\,\ref{prelim-scenarios}. In Table\,\ref{prelim-scenarios}, the percentages for each DER should be interpreted as follows: 1) {[HVAC, EWH and BEV]: percentage of consumers which use these resources; 2) {[solar and wind]: installed capacity as a percentage of total installed electricity capacity. We use the fact that the total installed electric capacity in the US is about 1074.64 GW \cite{eia-cap}.}

%Lastly, we convert the DER distributions for each scenario to appear as percentages of expected peak load for the scenario. This was achieved by using an analysis of the load profile of various appliances in a representative single family home in the US from \cite{rahman}, together with assuming that the charging load of BEVs is 6.6 kW \cite{bev-avg}, the total installed electric capacity in the US is about 1074.64 GW \cite{eia-cap}, and the average peak load in summer is around 723 GW \cite{statista}. 

%While the exact values used in Table\,\ref{prelim-scenarios} may change based on new data-sources, the purpose of this table is give qualitative view of how the DER penetrations change in different scenarios, and to create some reasonable test scenarios for algorithms on DER integration and demand response. 

\begin{table*}[thb]
\caption{Scenarios with varying DER distributions}
\label{prelim-scenarios}
\centering
\begin{tabular}{|l*{9}{|c}}\hline
Scenario & Income level & Climate type & Incentive level & \% HVAC & \%solar & \%wind & \%EWHs & \%BEVs  \\\hline
1 & Low	 & Mild & Low & 78.9	& 9.6 & 15.3 & 41.4 & 30.6\\\hline
2 & Low & Mild & High & 78.9 & 48.1 & 16 & 41.4 & 47.4 \\\hline
3 & Low	 & Extreme & Low & 89.8 & 8.5 & 12.6 & 55.4 & 30.6\\\hline
4 & Low	 & Extreme & High & 89.8	 & 45.8 & 13.2 & 55.4 & 47.4\\\hline
5 & High & Mild & Low & 81.7	 & 9.6 & 15.3 & 36.2 & 36.2\\\hline
6 & High & Mild & High & 81.7  & 48.1 & 16 & 36.2 & 51.5\\\hline
7 & High & Extreme & Low & 92.6 & 8.5 & 12.6 & 50.2 & 36.2\\\hline
8 & High & Extreme & High & 92.6 & 45.8 & 13.2 & 50.2 & 51.5\\\hline
\end{tabular}
\end{table*}

We use the procedure described in this paper to approximate the aggregated flexibility associated with various DER scenarios in Table\,\ref{prelim-scenarios}. Using a total population size of 20 DERs, we generate random ensembles based on the percentages from the Table\,\ref{prelim-scenarios}. Specifically, we show the plots for scenarios 1, 3, 7 and 8\,. Comparing between the scenarios 7 and 8, we see that the increase in penetration of solar PVs show up, for example, as a sharper boundary on the right-hand side. On the other hand, the scenario 3 has a distinctive narrower affine shape, which can be attributed to its relatively higher penetration of air-conditioning loads compared to other loads.
\begin{figure*}[h]
\centering
\captionsetup{justification=centering}
\subfigure[scenario 1]{
\includegraphics[scale=0.42]{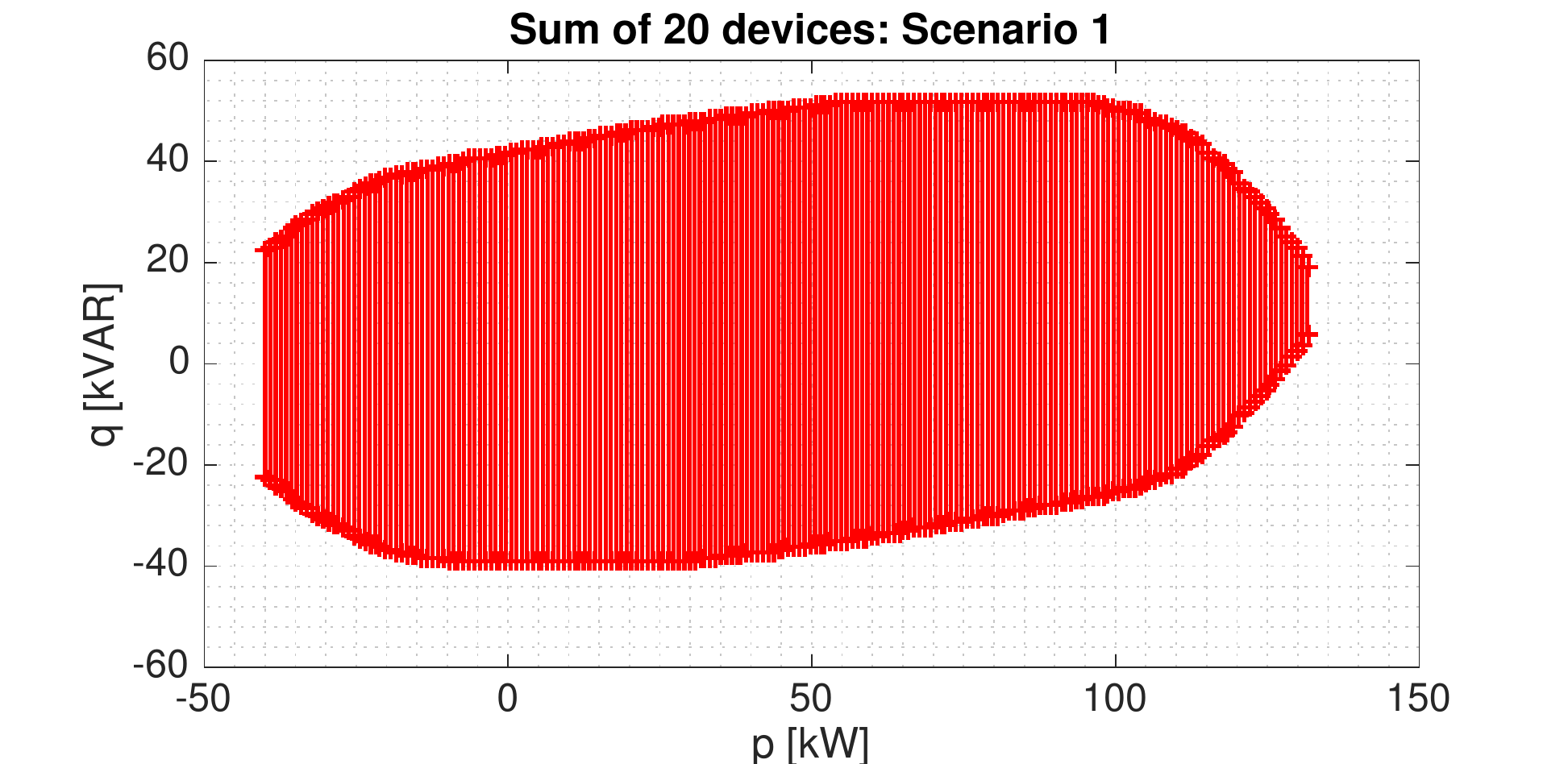}\label{F:scen1}
}
\hspace{-0.3in}
\subfigure[scenario 3]{
\includegraphics[scale=0.42]{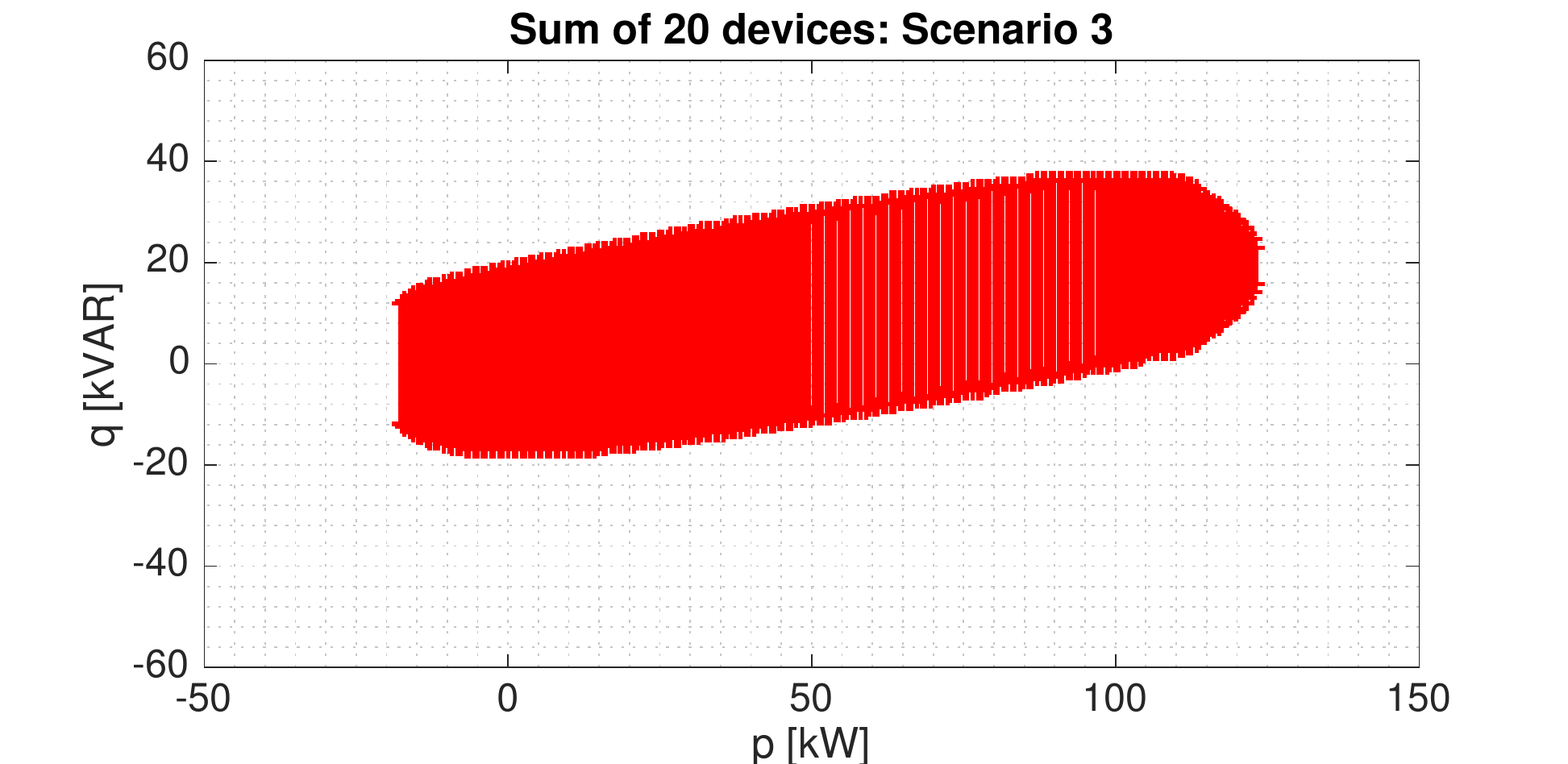}\label{F:scen3}
}
%\hspace{0.001in}
\subfigure[scenario 7]{
\includegraphics[scale=0.42]{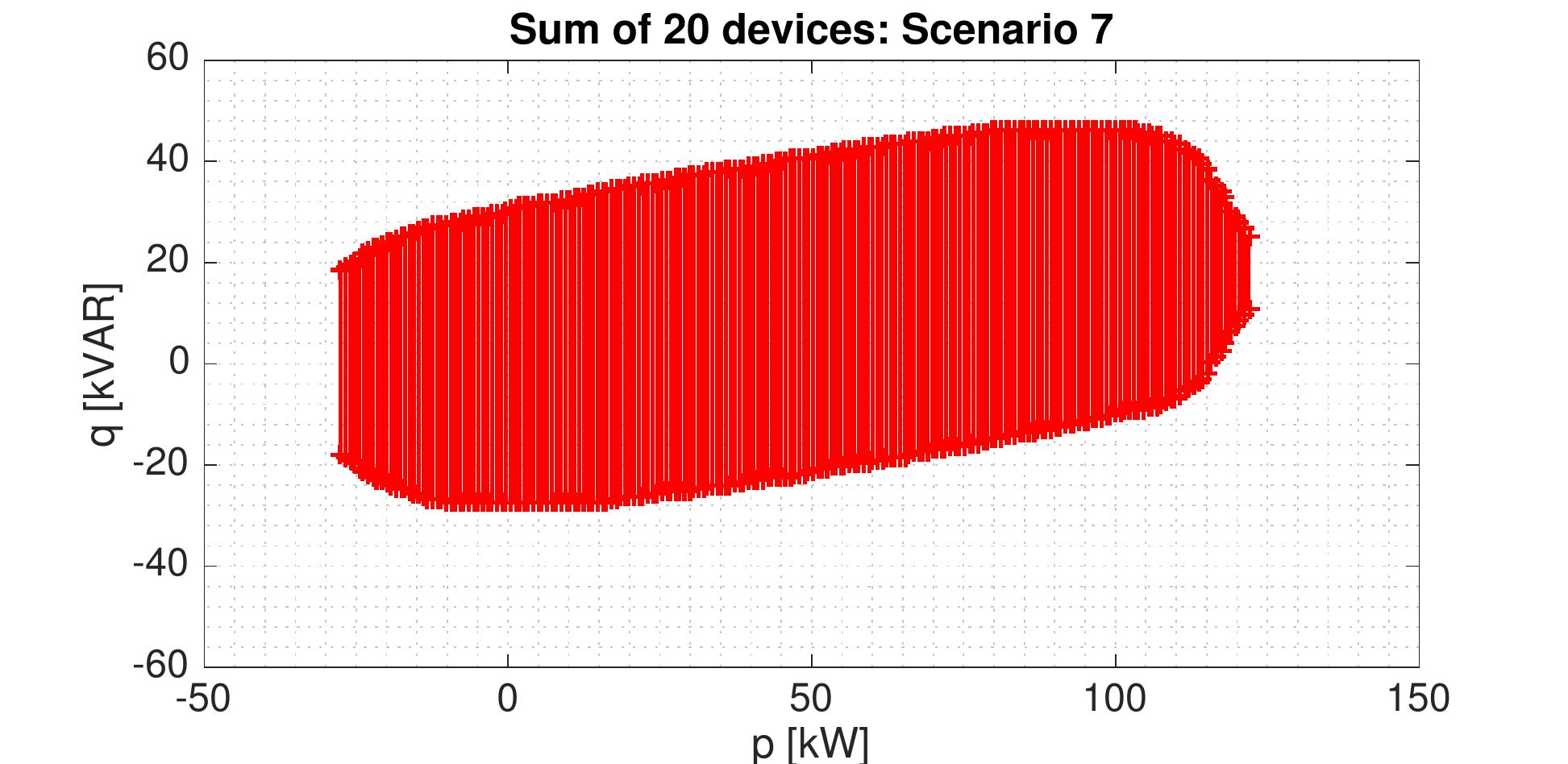}\label{F:scen7}
}
\hspace{-0.3in}
\subfigure[scenario 8]{
\includegraphics[scale=0.42]{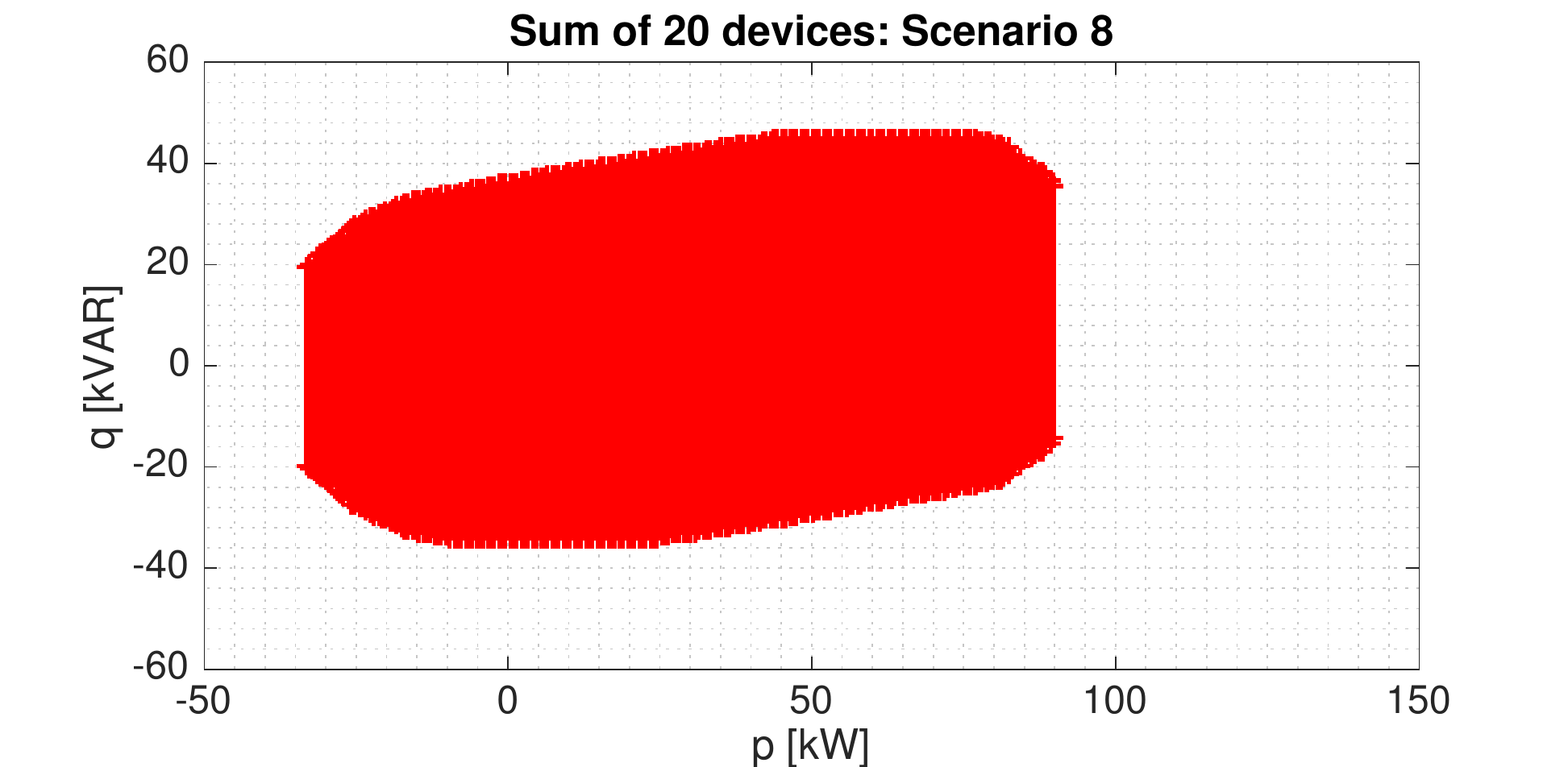}\label{F:scen8}
}
\caption[Optional caption for list of figures]{Aggregated flexibility domains for some representative DER scenarios listed in Table\,\ref{prelim-scenarios}.}
\label{F:scens}
\end{figure*}

\section{Conclusions}\label{S:concl}

\vspace{-0.1in}

This paper is motivated from the need for distributed control and optimization at the power distribution networks. Specifically, the problem of modeling short- term flexibility in ensembles of heterogeneous DERs is addressed. We present a scalable algorithm to approximate, within some specified error tolerance, the aggregated short-term flexibility in an ensemble of DERs. Performance analysis of the proposed algo- rithm is presented in terms of complexity and accuracy of the results. We show that the algorithm can ac- hieve $\mathcal{O}(N)$ and $\mathcal{O}(N^2)$ complexity in some special scenarios, while achieving $\mathcal{O}(N^3)$ complexity in likely realistic scenarios. Numerical results are provided to demonstrate the applicability of the algorithm.

\section*{Acknowledgment}

\vspace{-0.1in}

This work was supported by the US Department of Energy under
the Grid Modernization Lab Consortium initiative (contract DE-AC02-76RL01830).

% Fonts specification --- not shown as it doesn't exist in the Word document either. 

%\section{Fonts}

%A summary of fonts is provided in Table \ref{tab: fonts}. 

%\begin{table}[thb]
%\centering
%\caption{\label{font-table} Font guide. \vskip 3pt }
%\label{tab: fonts}
%\begin{tabular}{l|rl}
%\hline \bf Type of Text & \bf Font Size & \bf Style \\ \hline
%paper title & 14 pt &  \bf bold \\
%authors & 10 pt &  \underline{email} underlined \\
%abstract title & 12 pt &  \bf bold\\
%abstract text & 10 pt &  \it italic\\
%section titles & 12 pt & \bf bold \\
%subsection titles & 11 pt & \bf bold \\
%document text & 10 pt  & \\
%captions & 9 pt & \sansserifformat{\captionsize sans-serif, \bf bold} \\
%bibliography & 9 pt & \\
%footnotes & 8 pt & \\
%\hline
%\end{tabular}
%\end{table}

% if added before the last page, this command can help balancing columns
%\addtolength{\textheight}{-.2cm} 

%Bibliography 
\bibliographystyle{ieeetr}
\bibliography{RefList,references,MyReferences,RefMinkowski}

\end{document}